\documentclass[runningheads]{llncs}
\usepackage[T1]{fontenc}
\usepackage{amsmath,amssymb}
\usepackage{graphicx}
\usepackage{wrapfig}
\usepackage{multicol}
\usepackage[dvipsnames]{xcolor}
\usepackage{faktor}
\usepackage[mode=buildnew]{standalone}
\usepackage[hidelinks]{hyperref}
\usepackage[capitalise,nameinlink]{cleveref}

\newcommand{\tuple}[1]{\overline{#1}}

\newcommand{\limplies}{\rightarrow}

\newcommand{\pres}[2]{\langle #1 \mid #2 \rangle}

\newcommand{\Z}{\mathbb{Z}}

\newcommand{\cF}{\mathcal{F}}

\newcommand{\cL}{\mathcal{L}}
\renewcommand{\restriction}{\mathord{\upharpoonright}}
\newcommand{\into}{\hookrightarrow}

\newcommand\sqsubsetsim{\mathrel{%
  \ooalign{\raise0.2ex\hbox{$\sqsubset$}\cr\hidewidth\raise-0.8ex\hbox{\scalebox{0.9}{$\sim$}}\hidewidth\cr}}}

\spnewtheorem{rems}{Remarks}{\itshape}{\normalfont}
\spnewtheorem*{claimproof}{Proof of Claim}{\itshape}{\rmfamily}
\spnewtheorem*{proofsketch}{Proof sketch}{\itshape}{\rmfamily}

\begin{document}

    \title{Comparing the Effective Content of Subshifts}

    \author{A. {Nakid Cordero}\orcidID{0009-0005-9908-5642} \and
    {I. Scott}\orcidID{0000-0003-4231-9741}}
    
    \authorrunning{A. {Nakid Cordero} and I. Scott}
    
    \institute{University of Wisconsin--Madison, Madison WI 53705, USA}

\maketitle     

    \begin{abstract}
    
        Motivated by Ziegler's computability-theoretic characterisation of finite absolute presentability between groups, we prove an analogous theorem in symbolic dynamics.  We introduce the notion of one subshift being \emph{finitely determined over} another and show that this relation is characterised by \emph{Ziegler reducibility} between their co-languages. We further investigate a notion of \emph{existential closure} for subshifts.
    
    \keywords{Subshifts  \and Ziegler reducibility \and existential closure.}
    \end{abstract}

\section{Introduction}

The study of multidimensional subshifts---colourings of $\Z^d$ that avoid a set of forbidden finite patterns---has greatly benefited from the tools of computability theory. A signature example is the simulation theorem by Hochman \cite{hochman_dynamics_2009} and later refinements by Aubrun--Sablik \cite{aubrun_simulation_2013}  and Durand--Romashchenko--Shen \cite{durand_effective_2010}, which state that every effectively closed shift is simulated by a subshift of finite type (SFT) of higher dimension.

In this vein, we characterise in computability-theoretic terms when a subshift is ``completely determined'' by another subshift using a finite amount of extra information. Namely, we prove that this relationship coincides with \emph{Ziegler reducibility}, denoted $\leq^*$, (see \cref{def:Ziegler red}) between their co-languages.

The definition of $\leq^*$ dates back to Ziegler's analysis of the finitely generated subgroups of existentially closed groups \cite{ziegler_algebraisch_1980}. This fits into a growing ``dictionary'' between notions and results in combinatorial group theory and in symbolic dynamics, as developed in \cite{jeandel_enumeration_2017,jeandel_characterization_2019}.
We also raise an issue with extending the dictionary further, to capture existentially closed groups.  We show that the most natural definition and construction of ``existentially closed subshifts'' (following the theory for groups) does not satisfy Ziegler's Theorem \cite[Folgerungen III.1.8]{ziegler_algebraisch_1980}.

\section{Preliminaries}

In what follows, we will be working in subspaces of $\Sigma^{\Z^d}$ where $\Sigma$ is a finite set and $d < \omega$.  Recall that this space comes endowed with a natural topology generated by the basic clopen sets $[w] = \left\{ x \in \Sigma^{\Z^d} \mid x \restriction_{\operatorname{dom}(w)} = w \right\}$,
where $w: D \to \Sigma$ is a map from a \emph{rectangle} $[m_0,n_0] \times \cdots \times [m_{d-1},n_{d-1}] \subseteq \Z^d$.  We write $\Sigma^{*d}$ for the set of such maps and call them \emph{($d$-dimensional) words}, when there is no risk of ambiguity, we simply write $\Sigma^*$.  For $k < d$, we identify $\Z^k \subseteq \Z^d$ via the map $(v_0, \ldots, v_{k-1}) \mapsto (v_0, \ldots, v_{k-1}, 0, \ldots, 0)$, and hence, with this convention $\Sigma^{*k} \subseteq \Sigma^{*d}$. It is a standard fact that $\Sigma^{\Z^d}$ with this topology is a compact space.  

The \emph{shift map} $\mathrm{sh}: \Z^d \times \Sigma^{\Z^d} \to \Sigma^{\Z^d}$ is given by $\mathrm{sh}( v, x)(u) =  x(u +  v)$. 

\begin{definition}
    A \emph{subshift} $S$ is a topologically closed, shift-invariant subset of $\Sigma^{\Z^d}$, for some set $\Sigma$, called the \emph{alphabet of $S$} and $d < \omega$ called the \emph{dimension of $S$}.  Unless otherwise specified, we will assume that $\Sigma$ is finite.  An element $x \in S$ is called a \emph{point} or \emph{configuration} of $S$.
    
    We write $\Sigma_S$ and $d_S$ for the alphabet and dimension of $S$, respectively.  
\end{definition}

    Because subshifts are closed and shift-invariant, we may identify $S$ with the set of words that induce open sets disjoint from $S$.

\begin{definition}
    The \emph{co-language} of a subshift $S$ is
    \[ \cL^c(S) = \{w: D \to \Sigma_S \mid D\ \text{is a rectangle in $\Z^{d_S}$ and } \forall x \in S\ (x \restriction_{\operatorname{dom}(w)} \neq w) \}. \]
    The \emph{language} of $S$ is $\cL(S) = \Sigma^{*d} \setminus \cL^c(S)$.  Equivalently, the language of $S$ is the set of words that appear in some $x \in S$.  For $x \in S$, we define $\cL(x)$ and $\cL^c(x)$ accordingly.
\end{definition}

    As part of the philosophy of this paper of translating between notions in group theory and symbolic dynamics, and following the lead of \cite{jeandel_characterization_2019}, we write $\pres{\Sigma_S}{F}^{d_S}$ to denote the largest (under inclusion) subshift $S$ such that $F \subseteq \cL^c(S)$.
    Further, we denote by
    \[ S \models L(G) \land L^c(B) \]
    that $G \subseteq \cL(S)$ and $B \subseteq \cL^c(S)$.  If $G = \{v\}$ and $B = \{v\}$ then we write $S \models L(v) \land L^c(w)$.

    If $S$ can be expressed as $\pres{\Sigma_S}{F}^{d_S}$ for $F$ finite, we call $S$ a \emph{subshift of finite type (SFT)}.  Similarly, if $F$ can be chosen computable (equivalently, c.e.), we say that $S$ is \emph{effectively closed}.

\begin{example}
    The \emph{full shift} on alphabet $\Sigma$ and dimension $d$ is the shift $\pres{\Sigma}{\emptyset}^d$.  Notice that every shift is a subset of the full shift in the same alphabet and dimension.

    On the other end of this spectrum, a \emph{minimal shift} is a shift $S$ such that if $T \subseteq S$ is a subshift, then $T$ is empty or equal to $S$.  Equivalently, $S$ is minimal if for every $x,y \in S$, we have $\cL(x) = \cL(y)$.
\end{example}

Given a subshift $S$, we have several ways to form a new subshift from $S$.

\begin{definition}\label{def:new subshifts from old}
    Let $S$ be a subshift.  The \emph{slice of $S$ to alphabet $\Delta \subseteq \Sigma_S$} is the subshift $S \restriction_\Delta = S \cap \Delta^{\Z^{d_S}}$.  The \emph{slice of $S$ to dimension $d < d_S$} is the subshift 
    \[S\restriction_d=\{x\in \Sigma_S^{\Z^d}\mid \exists y \in S\;\forall v \in \Z^d \;(x(v)=y(v,0,\dots, 0) \}.\]
    In the other direction, we form a $d_S + k$-dimensional subshift from $S$ as
    \[ S^{(k)} = \{x \in \Sigma_S^{\Z^{d_S + k}} \mid \exists y \in S\ (x(u_1, \ldots, u_{d_S + k})) = y(u_1, \ldots, u_{d_S}) \}. \]
\end{definition}

Notice that the two notions of slice do not commute; in general $(S\restriction_\Sigma)\restriction_d\subseteq (S\restriction_d)\restriction_\Sigma$.  

Given a subshift $S$ and a set of words $F$, we can also build a new subshift by enlarging the co-language.  We write $S/F$ for the subshift $\pres{\Sigma_S}{\cL^c(S) \cup F}^{d_S}$ and note that this is a \emph{subset} of $S$.

\smallskip

There are a number of ways in the literature of recognising a subshift as a ``subsubshift'' of another.  We mention two that will be relevant to this manuscript.  

\begin{definition}\label{def:factor map}
    If $T$ and $S$ are subshifts of dimension $d$, then $f: T \to S$ is a \emph{factor map} if $f$ is surjective and commutes with the shift action.

    If, in addition, $f$ is injective, $f$ is called a \emph{conjugacy} or an \emph{isomorphism}.
\end{definition}

By the Curtis--Hedlund--Lyndon theorem, a factor map $f: T\to S$ is given by a \emph{block code}  $\hat f: \Sigma_T^D\to\Sigma_S$ such that $f(x)(u)= \hat f(\operatorname{sh}(u,x)\restriction_ D)$. By passing to an isomorphic copy of $T$, we can always assume that $\hat{f}:\Sigma_T\to\Sigma_S$. For simplicity of notation, when it makes sense, we identify $\hat f$ with its extension to a map from $\Sigma_T^*$ to $\Sigma_S^*$.  In particular, for $w \in \Sigma_S^*$, we always have that $\hat f^{-1}(w)$ is well-defined and finite.

We also borrow a notion from \cite{jeandel_characterization_2019}, better adapted to our approach of identifying a shift with its co-language.

\begin{definition}\label{def:sqare-embedding}
    Let $S$ and $T$ be subshifts.  Then \emph{$S$ is a full restriction of $T$}, written $S \sqsubseteq T$, if $\cL(T) \cap \Sigma^{*d} = \cL(S)$.  In particular, if $S \sqsubseteq T$, we must have $\Sigma_S \subseteq \Sigma_T$ and $d_S \leq d_T$.  
    
    To allay this dependence on the alphabet, we write $S \sqsubsetsim T$ if there is an injective map $\hat f: \Sigma_S \into \Sigma_T$ such that $f(S) \sqsubseteq T$.    
\end{definition}

Observe that $\sqsubseteq$ is transitive and that $S \sqsubseteq T$ iff $T \models L(\cL(S)) \land L^c(\cL^c(S))$.

\section{A characterisation of computable subshifts}

    The key definition is that of ``finitely determined over'' (\cref{def:finitely determined} below).  The definition should capture the intuition that from $T$, together with a finite amount of extra information, one can determine $S$.  So, what does it mean to have access to $T$?  How will $S$ be presented?  And what kind of finite information are we allowed?
    
    We identify $T$ and $S$ with their co-languages (and also implicitly with their alphabets and dimensions).  Thus, having access to $T$ should correspond to ``coding'' the co-language of $T$ (assuming that we know the alphabet and dimension of $T$). This is captured by Jeandel--Vanier's notion of full restriction (\cref{def:sqare-embedding}).  Similarly, ``determining $S$'' will correspond to the resulting subshift having $S$ as a full restriction.  Thus, ``$S$ is finitely determined over $T$'' should have the following flavour:
    \[ \text{If $T \sqsubseteq R$ and $R$ satisfies (*), then $S \sqsubseteq R$}, \]
    where (*) is some finite set of conditions, not dependent on $T$ or $S$.
    
    Here the analogy with groups provides some insight.  The relevant condition in the group case is that of a group $G$ being \emph{$\exists_1$-isolated} over another group $H$ (\cite[Bem.\ III.1.6.2]{ziegler_algebraisch_1980}).
    
    \begin{definition}\label{def:exists-isolated}
        A group $G$ is \emph{$\exists_1$-isolated}\footnote{Ziegler called this relation \emph{finitely absolutely presented over $H$} (see \cite[Bem.\ III.1.6.2]{ziegler_algebraisch_1980})  Unfortunately this conflicts with the standard notation in group theory that a presentation of a group corresponds to the unique group on the generators which is ``maximally free'' subject to the relations (and irrelations).  Indeed, in general even if $F \leq G$, one will need to take a larger group $F$ in order to ``present'' $G$ over $H$.  Thus, in the absence of better notation, we choose the hopefully-explanatory ``$\exists_1$-isolated''.} over $H$ if there is an existential formula $\varphi(\tuple x, \tuple h)$ such that any $F \models \varphi(\tuple a, \tuple h)$ with $\langle \tuple h \rangle \cong H$ has $\langle \tuple a \rangle \cong G$.
    \end{definition}

    Thus we should require that (*), in the framework above, enforces finitary constraints on the language and co-language of $R$.  
    
    The next condition on $R$ arises from the observation that the notion of ``substructure'' in group theory (and indeed more generally in algebra or model theory) bifurcates for subshifts: the first notion (that $T$ is a substructure of $R$ if the atomic diagram of $T$ is a subset of the atomic diagram of $R$) is captured by $\sqsubseteq$, and this is what we have used so far.  However, there is a second notion; namely that there is a ``structure-preserving'' injection $f: T \to R$.  In subshift theory, the analogue to this notion is that of a factor map; thus $T$ is a ``substructure'' of $R$ if there is a surjective, shift-commuting map $f: R \to T$ (see \cref{def:factor map}).  Indeed, this is borne out by the Simulation Theorems---shift-theoretic analogues of Higman's Embedding Theorem in group theory---proved by Durand--Romashchenko--Shen (Theorem 1 of \cite{durand_effective_2010}) and Aubrun--Sablik (Theorem 3.1 of \cite{aubrun_simulation_2013}).  We shall need both notions of substructure.
    
    Finally, for this to all be well-defined, we need to fix the dimension of $R$.
    
    \begin{definition}\label{def:finitely determined}
        Let $S$ and $T$ be subshifts.  $S$ is \emph{finitely determined over $T$} if there are:
        \begin{itemize}
            \item A finite alphabet $\Sigma$
            \item A dimension $d$
            \item A pair of finite sets of words $(G,B)$ in $\Sigma$
            \item A block code $\hat f: \Sigma^D \to \Delta$, for some $\Delta \supseteq \Sigma_S \cup \Sigma_T$
        \end{itemize}
        such that 
        \begin{enumerate}
            \item There is some subshift $R$ in dimension $d$ and alphabet $\Sigma$ such that $G \subseteq \cL(R)$, $B \subseteq \cL^c(R)$, and $T \sqsubseteq f(R)$.
            \item Any such subshift $R$ further satisfies that $S \sqsubseteq f(R)$.
        \end{enumerate} 
        In this case, say that $S$ is \emph{finitely determined over $T$ by $(\Sigma, d, G, B, \hat f)$}.  
        
        If $S$ is finitely determined over the empty subshift, we say that $S$ is \emph{finitely determined}.
    \end{definition}

\begin{proposition}
\label{example:minimal sft fin det}
    Every minimal SFT is finitely determined.
\end{proposition}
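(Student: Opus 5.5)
Let $S = \pres{\Sigma_S}{F}^{d_S}$ be a nonempty minimal SFT with $F$ finite. We take $\Sigma = \Sigma_S$, $d = d_S$, $\hat f$ the identity block code on $\Sigma_S$ (so $D=\{0\}$, $\Delta = \Sigma_S$, and $f(R)=R$), and $B = F$. For $G$ we pick one word: let $w_0 \in \cL(S)$ be any single-letter word, so $G = \{w_0\}$. If $S$ is empty we take $G=\emptyset$; the argument below then degenerates to $R \subseteq S = \emptyset$.

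Condition (1) of \cref{def:finitely determined} is witnessed by $R = S$. Indeed $F \subseteq \cL^c(S)$ and $w_0 \in \cL(S)$. Moreover $T$ is the empty subshift, so $T \sqsubseteq S$ holds vacuously under the convention that the empty subshift, with empty language, is a full restriction of anything. We should check that this is the intended reading, since any word of $T$'s alphabet would be in $\cL^c(T)$. If instead the empty subshift is taken with the empty alphabet, $\cL(R) \cap \emptyset^{*} = \emptyset$ is trivially true.

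For condition (2), let $R$ be any subshift in alphabet $\Sigma_S$ and dimension $d_S$ with $w_0 \in \cL(R)$ and $F \subseteq \cL^c(R)$. We must show $\cL(R) = \cL(S)$.
\begin{enumerate}
\item Since every word of $F$ is forbidden in $R$ and $S$ is the largest subshift avoiding $F$, we get $R \subseteq S$, hence $\cL(R) \subseteq \cL(S)$.
\item Since $w_0 \in \cL(R)$, $R$ is nonempty. By minimality of $S$, the nonempty subshift $R \subseteq S$ equals $S$, so $\cL(R) = \cL(S)$ and $S \sqsubseteq R = f(R)$.
\end{enumerate}
Both the bound $R \subseteq S$ and the equality $R = S$ rely on $R$ having the same alphabet and dimension as $S$, which is guaranteed by the choice of $\Sigma$ and $d$.

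The main obstacle is only bookkeeping: confirming that the identity counts as an admissible block code with $\Delta \supseteq \Sigma_S \cup \Sigma_T$, and that the empty-$T$ convention makes condition (1) hold. The mathematical content is just the two facts used above: the SFT presentation bounds $R$ from above, and the single nonemptiness witness, combined with minimality, forces equality.
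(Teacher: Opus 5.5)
Your proposal is correct and is essentially the paper's proof: the paper uses the same tuple $(\Sigma_S, d_S, \{w\}, F, \mathrm{id})$, with $F$ forcing $R \subseteq S$ and the single word of $\cL(S)$ forcing $R \neq \emptyset$, so minimality gives $R = S$. Your extra bookkeeping for empty $T$ and empty $S$ is harmless, and slightly more careful than the paper, which leaves these points implicit.
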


\begin{proof}
    Let $S=\pres{\Sigma_S}{F}^{d_S}$ be minimal.  We claim that $S$ is finitely determined by $(\Sigma_S,d_S,\{w\},F,\mathrm{id})$, where $w$ is any word in $\cL(S)$.  Indeed, if $R$ is any subshift satisfying \cref{def:finitely determined} for this tuple, then $R$ is a nontrivial subset of $S$ and hence by minimality is equal to $S$.  Thus $\mathrm{id}(R) \sqsupseteq S$.
\qed\end{proof}

    In contrast to \cref{example:minimal sft fin det}, not every SFT is finitely determined over any subshift. We show in \cref{Zieglers thm full} that $S$ is finitely determined over $T$ if and only if $\cL^c(S)$ is Ziegler-reducible to $\cL^c(T)$.  In this section, we warm up by showing that $\cL^c(S)$ is computable if and only if $S$ is finitely determined over $\emptyset$. This result can be seen as an analogue  to Rips' theorem that a group has computable word problem if and only if there is a $\exists$-formula $\varphi$ which defines it \cite{rips_characterization_82}.
    
    \begin{theorem}\label{char of computable subshifts}
        Let $S$ be a subshift in a finite alphabet.  Then $\cL^c(S)$ is computable if and only if $S$ is finitely determined over $\emptyset$.
    \end{theorem}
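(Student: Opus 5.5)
The converse direction is a compactness argument; the forward direction builds an effectively closed shift that is rigid once one marker symbol appears, and then applies a simulation theorem to it.

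\emph{($\Leftarrow$)} Suppose $S$ is finitely determined over $\emptyset$ by $(\Sigma,d,G,B,\hat f)$. The condition $\emptyset \sqsubseteq f(R)$ is vacuous, so the admissible $R$ are exactly the subshifts with $G\subseteq\cL(R)$ and $B\subseteq\cL^c(R)$. I would show that $\cL(S)$ and $\cL^c(S)$ are both c.e.

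First, $R_0=\pres{\Sigma}{B}^d$ is admissible. Indeed some admissible $R$ exists by clause 1, and $R\subseteq R_0$, so $G\subseteq\cL(R_0)$. Hence $\cL(S)=\cL(f(R_0))\cap\Sigma_S^{*d_S}$. A word $w$ lies outside $\cL(f(R_0))$ iff every word of $\hat f^{-1}(w)$ is forbidden in the SFT $R_0$. By compactness this is witnessed at a finite stage, so $\cL^c(S)$ is c.e.

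Second, $w\in\cL(S)$ iff no admissible $R$ avoids $\hat f^{-1}(w)$. Suppose some admissible $R$ does avoid it. Then each $g\in G$ lies in the language of the SFT $R_w=\pres{\Sigma}{B\cup\hat f^{-1}(w)}^d$. Conversely, if each $g\in G$ lies in $\cL(R_w)$, then $R_w$ itself is admissible and avoids $\hat f^{-1}(w)$. So $w\in\cL(S)$ iff some $g\in G$ lies in $\cL^c(R_w)$. This is c.e. by compactness, since $G$ is finite and $\hat f^{-1}(w)$ is finite and computable from $w$. Thus $\cL^c(S)$ is computable.

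\emph{($\Rightarrow$)} Suppose $\cL^c(S)$ is computable, and fix a computable enumeration $w_0,w_1,\dots$ of $\cL(S)$ in which every word appears infinitely often. Let $\#\notin\Sigma_S$ be a fresh symbol. I would build an effectively closed subshift $Y$ on $\Sigma_S\cup\{\#\}$, in dimension $d_S$ (or $d_S+1$ if room is needed). Its $\#$-containing points are rigid: they are a $\#$-delimited layout of consecutive blocks in which block $n$ is immediately followed by block $n+1$, the blocks being padded copies of the $w_n$. This is forced by computably many forbidden patterns, since the enumeration is computable.

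Put $Z=S\cup Y$, which is effectively closed. The layout must ensure that every $\#$-free pattern of $Y$, in particular every pattern inside a padded block, is a subword of some $w_n$ and hence lies in $\cL(S)$. Also, any point containing one $\#$ must contain infinitely many later blocks, hence every $w_n$.

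Next I would apply the simulation theorem of Aubrun--Sablik or Durand--Romashchenko--Shen to $Z$ (or to a lift $Z^{(k)}$). This gives an SFT $X=\pres{\Sigma}{B}^{d}$ and a block code $\hat f$ with $f(X)=Z^{(k)}$. Take $G=\{g\}$, where $g$ is any word of $\cL(X)$ whose image contains $\#$.

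Now let $R\subseteq X$ be any subshift with $g\in\cL(R)$. Then $\cL(f(R))\cap\Sigma_S^{*}\subseteq\cL(Z)\cap\Sigma_S^*=\cL(S)$. Moreover, $f(R)$ contains a point containing $\#$, so by rigidity it contains every $w_n$. Hence $S\sqsubseteq f(R)$. Clause 1 is witnessed by $R=X$.

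\emph{Main obstacle.} The main obstacle is the design of $Y$. Its $\#$-free patterns, including limit points with no $\#$ and patterns straddling block boundaries or padding, must all lie in $\cL(S)$. At the same time it must stay effectively closed and survive the dimension bookkeeping of the simulation theorem, namely the slice $\restriction_{d_S}$ versus the lift $^{(k)}$. I would handle this first by making padding and borders use $\#$ exclusively, so that any $\#$-free rectangle sits inside a single block.
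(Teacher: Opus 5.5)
Your $(\Leftarrow)$ direction is the paper's argument. Your $R_w=\pres{\Sigma}{B\cup\hat f^{-1}(w)}^d$ is in fact the cleaner way to state its second c.e.\ condition.

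For $(\Rightarrow)$ the paper goes a different way. It quotes Jeandel--Vanier for a \emph{minimal} SFT $T$ with $S\sqsubseteq T$. Minimality is what guarantees that every nonempty admissible $R$ already carries all of $\cL(S)$. Your substitute for minimality, a marker $\#$ that triggers a rigid enumeration, fails at the claim that every point of $Y$ containing a $\#$ contains all blocks from some index on.

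Since each word recurs infinitely often in $(w_n)$, the index $n$ cannot be read off $w_n$. It must be carried by the $\#$-padding or some other feature, and since there are infinitely many indices, these features have unbounded size. $Y$ is closed, so it then contains limit configurations in which a $\#$ occurs but no complete, index-determined block does. Examples are $\#^{\Z^{d_S}}$ when the padding grows, a single copy of a fixed $w$ in a sea of $\#$'s (a limit over the infinitely many $n$ with $w_n=w$), or a lone $\#$-wall between two infinite $\Sigma_S$-regions. These contain $\#$ but not $\cL(S)$.

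Consequently, taking $g$ to be \emph{any} word whose image contains $\#$ does not work. Suppose $g$ also occurs above such a degenerate configuration $y$, and let $C$ be the orbit closure of $y$. Then $R=X\cap f^{-1}(C^{(1)})$ is admissible, but $f(R)=C^{(1)}$ misses most of $\cL(S)$, so $S\not\sqsubseteq f(R)$.

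The missing idea is to let $G$ carry a \emph{certificate} rather than a bare marker. Choose $g$ so that $\hat f(g)$ contains a complete block together with the local context that pins down its index. Then design the forbidden patterns so that such a certificate forces the next complete block with its index. In dimension~1, for instance, encode index $n$ by a maximal $\#$-run of length exactly $L_n$, with $L_n$ strictly increasing. Then forbid every finite pattern in which such a run, flanked by letters of $\Sigma_S$, is not preceded by $w_n$, followed by $w_{n+1}$, and then by a maximal run of length exactly $L_{n+1}$. Any admissible $R$ then contains a point whose row carries the whole forward tail, and your argument goes through.

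To control $\#$-free patterns of limit configurations, the simplest step is to add $\cL^c(S)$, which is computable, to the forbidden patterns of $Y$.

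Alternatively, make $Y$ minimal with $S\sqsubseteq Y$. Then every nonempty $R\subseteq X$ has $f(R)=Y^{(1)}$, so you need only a minimal effectively closed $Y$, not a minimal SFT. Constructing such a $Y$, however, is precisely the work the paper delegates to Jeandel--Vanier.
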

    
    \begin{proof}
        Firstly, let $S$ be a $d_S$-dimensional computable subshift.  By \cite[Corollary 27]{jeandel_characterization_2019}, there is a minimal SFT $T$ in dimension $d_S + 2$ with $S \sqsubseteq T$.  Then $M$ is finitely determined by \cref{example:minimal sft fin det}, and hence $S$ is also finitely determined.

        For the other direction, suppose that $S$ is finitely determined over $\emptyset$ by $(\Sigma, k, G, B, \hat f)$.  Then for any subshift $R \subseteq \Sigma^{\mathbb{Z}^k}$ with $R \models L(G) \land L^c(B)$, we have that $S \sqsubseteq f(R)$; in particular $S \sqsubseteq f(\pres{\Sigma}{B}^k)$.  Thus, $w \in \cL^c(S)$ if and only if $f^{-1}(w) \subseteq \cL^c(\pres{\Sigma}{B}^k)$.  But $\cL^c(\pres{\Sigma}{B}^k)$ is c.e.\ since $B$ is finite, so it follows that $\cL^c(S)$ is c.e.
    
        On the other hand, $w \in \cL(S)$ if and only if $f(\pres{\Sigma}{B}^d)/\{w\} \not\models L(G)$.  Since $G$ and $B$ are finite, this is also c.e.
    \qed\end{proof}

\section{Relativising the characterisation}
In this section, we show that \cref{char of computable subshifts} relativises in the Ziegler-degrees, defined below.  The definition is unusual at first pass, so we invite the reader to consult \cref{Ziegler-red explanation} for a discussion.

\begin{definition}\label{def:Ziegler red}
    Let $(D_u)_{u < \omega}$ be an effective enumeration of the finite subsets of $\omega$ and $(D^1_v)_{v < \omega}$ an effective enumeration of the subsets of $\omega$ of cardinality at most 1 (e.g., $D^1_v = \{v - 1\} \cap \omega$).

    Let $X, Y \subseteq \omega$.  Then  \emph{$X$ is enumeration-reducible to $Y$}, denoted $X \leq_e Y$,  if there is a c.e.\ set $W$ such that \[ n \in X \iff \exists u\ (\langle n,u \rangle \in W \land D_u \subseteq Y). \]
    \emph{$X$ is 1-enumeration-reducible to $Y$}, $X \leq_e^1 Y$, if there is a c.e.\ set $W$ such that \[ n \in X \iff \exists u,v\ (\langle n,u,v \rangle \in W \land D_u \subseteq Y \land D^1_v \subseteq Y^c). \]
    Finally, \emph{$X$ is Ziegler-reducible to $Y$}, $X \leq^* Y$, if $X \leq_e Y$ and $X^c \leq_e^1 Y$.
\end{definition}

\begin{example}\label{Ziegler-red explanation}
    Since we can give an effective list $\{w_i\}_{i\in\omega}$ of all words on alphabet $\Sigma$, we can identify a word $w$ with its index. Now, for every $B \subseteq \Sigma^*$, we have $\cL^c(\pres{\Sigma}{B}^d) \leq_e B$. The reduction is given by the c.e.\ set \[W=\{\langle w,u\rangle\mid  \exists n\; (\mbox{every extension }v \mbox{ of }w\mbox{ of length }n \mbox{ contains a word from }D_u ) \}\]
\end{example}

Intuitively, $X \leq^* Y$ if and only if positive information about $X$ (i.e., statements of the form $n \in X$) can be determined by asking finitary positive questions about $Y$, while negative information can be determined by asking a series of questions involving at most one negative query and finitely many positive queries.  The notion was introduced by Martin Ziegler in \cite{ziegler_algebraisch_1980}, in which he showed that it is closely related to parameter-definability in groups.

\begin{theorem}\label{Zieglers thm full}
    Let $S$ and $T$ be subshifts in finite alphabets.  Then $S$ is finitely determined over $T$ if and only if $\cL^c(S) \leq^* \cL^c(T)$.
\end{theorem}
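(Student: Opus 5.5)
We prove the two directions separately, relativising the proof of \cref{char of computable subshifts}.

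\emph{($\Rightarrow$).} Suppose $S$ is finitely determined over $T$ by $(\Sigma,d,G,B,\hat f)$. For $w\in\Sigma_S^*$ we will exhibit enumeration procedures relative to $\cL^c(T)$.

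\emph{Positive information.} Let $R_T$ be the largest subshift $R\subseteq\Sigma^{\Z^d}$ with $B\subseteq\cL^c(R)$ and $f(R)\models L^c(\cL^c(T))$, i.e.\ $R_T=\pres{\Sigma}{B\cup \hat f^{-1}(\cL^c(T))}^d$. Clause (1) gives some admissible $R$; such an $R$ is contained in $R_T$. We need $R_T$ itself to be admissible, so that clause (2) applies to it. It satisfies $L(G)$ because $R\subseteq R_T$. For the $L(\cL(T))$ part, $\cL(T)\subseteq\cL(f(R))\subseteq\cL(f(R_T))$. Hence $T\sqsubseteq f(R_T)$, and therefore $S\sqsubseteq f(R_T)$. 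Consequently $w\in\cL^c(S)$ iff $\hat f^{-1}(w)\subseteq\cL^c(R_T)$. By compactness, as in \cref{Ziegler-red explanation}, this holds iff there are a finite $D\subseteq\cL^c(T)$ and an $n$ such that every pattern of size $n$ around a preimage of $w$ contains a word of $B\cup\hat f^{-1}(D)$. This is an enumeration reduction $\cL^c(S)\leq_e\cL^c(T)$.

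\emph{Negative information.} Take $w\in\cL(S)$. Let $R'=R_T/\hat f^{-1}(w)$, which forbids every preimage of $w$. We claim $R'$ cannot be admissible, i.e.\ $R'\not\models L(G)\land L(\cL(T))$. Indeed, if it were admissible, clause (2) would give $S\sqsubseteq f(R')$, which would put $w$ in $\cL(f(R'))$, contradicting that all its preimages are forbidden. Conversely, if $w\in\cL^c(S)$ then $R'=R_T$, which is admissible. So $w\in\cL(S)$ iff either some $g\in G$ is absent from $R'$ or some single $t\in\cL(T)$ is absent from $f(R')$. By compactness each such absence is witnessed by finitely many words of $\cL^c(T)$. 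A word $g\in G$ requires no negative query, while a word $t$ requires exactly one, namely $t\notin\cL^c(T)$. This gives $\cL(S)\leq^1_e\cL^c(T)$. The one-negative-query structure is exactly why the reduction is $\leq^1_e$ and not something stronger.

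\emph{($\Leftarrow$).} This is the main obstacle: we must build, from the c.e.\ sets $W,W'$ witnessing $\cL^c(S)\leq^*\cL^c(T)$, a single finite specification $(\Sigma,d,G,B,\hat f)$. The plan is to relativise the Jeandel--Vanier construction (\cite[Corollary 27]{jeandel_characterization_2019}) used in \cref{char of computable subshifts}, in the style of the simulation theorems. Choose $R$ in a high dimension with an alphabet carrying four layers:
\begin{itemize}
\item a layer of $T$-symbols and $S$-symbols, with $\hat f$ projecting to $\Delta$;
\item a hierarchical SFT computation layer, forced to be minimal-like by finitely many forbidden patterns $B$ as in \cref{example:minimal sft fin det};
\item a layer in which the computation enumerates $W$ and $W'$;
\item a ``catalogue'' layer in which finite blocks write out words together with a claim of whether each word is present or absent in the $T$-layer.
\end{itemize}
The presence of the computation and its consistency are forced by finitely many words in $G$, using a minimal SFT so that one occurrence propagates everywhere. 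The computation then enforces three things. First, a catalogue claim ``$u$ forbidden'' is consistent with the $T$-layer (local checking over growing windows). Second, whenever $W$ enumerates $\langle w,u\rangle$ and all of $D_u$ is claimed forbidden, $w$ is forbidden in the $S$-layer. Third, whenever $W'$ gives $\langle w,u,v\rangle$ with $D_u$ claimed forbidden and the element of $D^1_v$ present in $T$, the word $w$ must appear in the $S$-layer. The delicate point is this last single positive appearance. It should be realisable because $T\sqsubseteq f(R)$ guarantees that every word of $\cL(T)$ appears somewhere in the $T$-layer, and a minimal-SFT-style layout lets each configuration see every word of $T$ within bounded blocks. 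That matches exactly one negative query per derivation, whereas two negative queries could not be coordinated. We then verify clause (1) by constructing an $R$ from the true $T$, and clause (2) from the soundness of the three enforcement rules.
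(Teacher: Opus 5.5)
Your ($\Rightarrow$) direction is correct and is the paper's argument. Your $R_T$ is exactly the paper's $Q=\pres{\Sigma}{B\cup\hat f^{-1}(\cL^c(T))}^d$, and your analysis of $R_T/\hat f^{-1}(w)$, with one negative query per candidate $t\in\cL(T)$, is the same bookkeeping.

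The ($\Leftarrow$) direction has a genuine gap: the catalogue mechanism you describe does not enforce what the verification needs.
\begin{itemize}
\item Your first rule checks only claims of \emph{absence}. Nothing forces a configuration to claim any word absent, so a configuration may mark every word ``present''. Your second and third rules then never fire, and the $S$-layer is left unconstrained. A subshift $P$ built from such configurations, with $T$-layers drawn from $T$, meets the hypotheses of clause (2) without $S\sqsubseteq f(P)$.
\item You cannot repair this by checking presence claims, nor impose your third rule as stated. ``$u$ (resp.\ $w$) occurs somewhere in the configuration'' is an open condition, not a closed one. So it cannot be imposed by any set of forbidden patterns, c.e.\ or finite, unless the place of occurrence is bounded.
\end{itemize}
The missing idea is the paper's oracle shift. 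There the $S$- and $T$-data are themselves \emph{displayed}, in the actual letters of $\Sigma_S$ and $\Sigma_T$, on stacked, subword-closed rows, and the rows of type $i$ are periodic with a bounded period. This has three consequences:
\begin{itemize}
\item ``$v$ is displayed / not displayed in this configuration'' is readable from a finite window.
\item The axioms $I$ and $J$ therefore become a c.e.\ set of forbidden patterns defining an effectively closed $Q$, to which the simulation theorem is then applied.
\item The hypothesis $T\sqsubseteq f(P)$ itself polices which $T$-words can be displayed, so no separate claims need verifying.
\end{itemize}

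Your justification of the third rule is also wrong. Your construction does not force a single configuration to contain every word of $\cL(T)$. The conditions $B\subseteq\cL^c(P)$, $G\subseteq\cL(P)$ and $T\sqsubseteq f(P)$ constrain only the language of $P$, which is the union of the languages of its configurations. A minimal skeleton makes the computation structure uniform, not the data. If single configurations did see all of $\cL(T)$, axioms with several negative queries could be coordinated, which contradicts your own correct remark.

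What is needed is a single-witness argument:
\begin{itemize}
\item For $w\in\cL(S)$, take an axiom $\langle w,n,m\rangle$ with $D_n\subseteq\cL^c(T)$ and $D^1_m=\{t\}\subseteq\cL(T)$. Since $t\in\cL(f(P))$, some configuration contains $t$. That configuration automatically avoids $D_n$, so the $J$-constraint fires there and forces $w$ to be displayed.
\item Dually, for $w\in\cL^c(S)$, take an axiom $\langle w,u\rangle$ with $D_u\subseteq\cL^c(T)$. No configuration displays any word of $D_u$, so the $I$-constraint excludes $w$ from every configuration.
\end{itemize}
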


We prove the two directions independently.

\subsection{Finitely determined implies Ziegler reducible}

\begin{lemma}
    If $S$ is finitely determined over $T$, then $\cL^c(S) \leq^* \cL^c(T)$.
\end{lemma}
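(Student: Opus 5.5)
We relativise the proof of the ``only if'' direction of \cref{char of computable subshifts}. Suppose $S$ is finitely determined over $T$ by $(\Sigma, d, G, B, \hat f)$. The plan is to replace the SFT $\pres{\Sigma}{B}^d$ by the canonical maximal candidate
\[ R_T = \pres{\Sigma}{B \cup \hat f^{-1}[\cL^c(T)]}^d, \]
the largest subshift in $\Sigma^{\Z^d}$ whose image under $f$ avoids every word of $\cL^c(T)$ and which avoids $B$. We will show $R_T$ is one of the admissible $R$ from \cref{def:finitely determined}, read off both halves of the Ziegler reduction from it, and check that the reductions are uniform in the oracle $\cL^c(T)$ only through finitely many positive queries and at most one negative query.

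\emph{Step 1: $R_T$ is admissible.} By condition (1) there is some $R_0 \models L(G)\land L^c(B)$ with $T \sqsubseteq f(R_0)$. Then $\cL^c(T) \subseteq \cL^c(f(R_0))$, so $R_0 \subseteq R_T$; hence $G \subseteq \cL(R_0)\subseteq \cL(R_T)$ and $B \subseteq \cL^c(R_T)$. For $T \sqsubseteq f(R_T)$, words of $\cL^c(T)$ are excluded from $f(R_T)$ by construction. Conversely, words of $\cL(T)$ lie in $\cL(f(R_0)) \subseteq \cL(f(R_T))$. So by condition (2), $S \sqsubseteq f(R_T)$, i.e.\ for $w$ in the alphabet and dimension of $S$, $w\in\cL^c(S)$ iff $w\in\cL^c(f(R_T))$ iff $\hat f^{-1}(w)\subseteq \cL^c(R_T)$.

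\emph{Step 2: $\cL^c(S)\leq_e \cL^c(T)$.} By compactness, $\hat f^{-1}(w) \subseteq \cL^c(R_T)$ iff there is a finite $D \subseteq \cL^c(T)$ such that each $v\in\hat f^{-1}(w)$ lies in $\cL^c(\pres{\Sigma}{B\cup \hat f^{-1}[D]}^d)$. The latter condition is c.e.\ in $(w,D)$ as in \cref{Ziegler-red explanation}: every extension of $v$ to a large box contains a forbidden word. This gives the c.e.\ set $W$.

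\emph{Step 3: $\cL(S)\leq_e^1 \cL^c(T)$.} This is the step I expect to be the crux, and the place where the single negative query is needed. Take $w$ in the alphabet and dimension of $S$. I claim $w\in\cL(S)$ iff there is a $w'\in \cL(T)$ such that, for every finite $D\subseteq\cL^c(T)$, the shift $f(R_T)/\{w\}$ fails to satisfy $L(G)\wedge L(w')$. The mechanism is the one in the last paragraph of the proof of \cref{char of computable subshifts}, now with $\cL(T)$ playing a role. If $w\in\cL(S)$, consider $R' = R_T/\hat f^{-1}(w)$. It still avoids $B$ and $\hat f^{-1}[\cL^c(T)]$, and $S\not\sqsubseteq f(R')$. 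So by condition (2), either $G\not\subseteq\cL(R')$ or $T\not\sqsubseteq f(R')$. In the latter case some $w'\in\cL(T)$ is missing from $f(R')$. In either case compactness yields a finite $D\subseteq \cL^c(T)$ witnessing this with $B\cup\hat f^{-1}[D]$ in place of the full oracle. The witness consists of one word $w'\in\cL(T)$, which is a single negative query to $\cL^c(T)$ (or no query in the $G$ case), together with the finite positive set $D$. Checking that a word of $G$ or $w'$ is absent from $\pres{\Sigma}{B\cup \hat f^{-1}[D\cup\{w\}]}^d$ is c.e.

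Conversely, suppose such a witness exists and $w\notin\cL(S)$. Then $\hat f^{-1}(w)\subseteq\cL^c(R_T)$, so $R'=R_T$, which contains $G$ and $w'$ in its language by Step 1. This contradicts the witness, since $R_T \subseteq \pres{\Sigma}{B\cup\hat f^{-1}[D\cup\{w\}]}^d$ and so the witnessed absence would transfer to $R_T$.

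The delicate points to verify carefully are two. First, the compactness arguments must let a finite $D$ suffice for non-membership in the language. Second, we need to confirm that only one word of $\cL(T)$ is ever needed: $T\not\sqsubseteq f(R')$ fails at a single word, which is what makes $\leq_e^1$ rather than full Turing reducibility the right notion.
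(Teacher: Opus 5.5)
Your proposal is correct and follows essentially the same route as the paper: your $R_T$ is the paper's $Q=\pres{\Sigma}{B\cup \hat f^{-1}(\cL^c(T))}^d$, Step 2 is its chain $\cL^c(T)\geq_e\cL^c(Q)\geq_e\cL^c(f(Q))\geq_e\cL^c(S)$, and Step 3 is its analysis of $Q/\hat f^{-1}(w)$, where the single negative query is a word of $\cL(T)$ that drops out of the image (or no query is needed when a word of $G$ drops out). One small repair: the displayed claim in Step 3 should read ``there is a finite $D\subseteq\cL^c(T)$ such that $\pres{\Sigma}{B\cup\hat f^{-1}[D\cup\{w\}]}^d$ omits some word of $G$ or omits every word of $\hat f^{-1}(w')$''. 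This existential form, rather than ``for every finite $D$'' applied to $f(R_T)/\{w\}$, is what your compactness argument actually establishes and what the $\leq_e^1$ reduction uses.
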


\begin{proof}
Let $S$ be finitely determined over $T$ via $(\Sigma, d, G, B, \hat{f})$, 
and let $R$ be a $d$-dimensional subshift over $\Sigma$ with $B \subseteq \cL^c(R)$, $G \subseteq \cL(R)$, and $T \sqsubseteq f(R)$.

Let $U = \hat f^{-1}(\cL^c(T))$
and set $Q = \pres{\Sigma}{B \cup U}^d$.  
Then, since $T \sqsubseteq f(R)$, we have $U \cap \cL(R) = \emptyset$ and $R \subseteq Q$, so $f(R) \subseteq f(Q)$. Hence $Q \models L(G) \land L^c(B)$, and $T \sqsubseteq f(Q)$.  Since this determines $S$ over $T$, we have $S \sqsubseteq f(Q)$.  
Furthermore, since $B$ is finite and $\hat f$ is a finite block code, $\cL^c(Q) \leq_e U \leq_e \cL^c(T)$. 

Because $\hat{f}$ is a block code, $\cL(f(Q)) \leq_e \cL(Q)$. Putting this together, we get
\[ \cL^c(T) \geq_e \cL^c(Q) \geq_e \cL^c(f(Q)) \geq_e \cL^c(S).\]

\smallskip

To show that $\cL(S) \leq_e^1 \cL^c(T)$, let $w\in\Sigma_S^{*d_S}$ and 
$U_w = \hat f^{-1}(w)$.  (Since $\hat f$ is finite-to-one, this will be a finite set of words in $\Sigma^{*d}$.)  Thus, $w \in \cL^c(S)$ if and only if $U_w \subseteq \cL^c(Q)$.  

If $w \in \cL^c(S)$, then $Q/U_w = Q$ and so $\cL^c(f(Q/U_w)) \cap (\cL(T) \cup G) = \emptyset$. 
On the other hand, if $w \in \cL(S)$, then $S \not\sqsubseteq f(Q/U_w)$, so if $G \subseteq \cL(Q/U_w)$, we must have $T \not\sqsubseteq f(Q/U_w)$.  Since $f(Q/U_w) \subseteq f(Q)$, this must be because some $u \in \cL(T)$ is in the co-language of $f(Q/U_w)$.  Hence, if $w \in \cL(S)$, then $\cL^c(f(Q/U_w)) \cap (\cL(T) \cup G) \neq \emptyset$.

Observe that $\cL^c(f(Q/U_w)) \leq_e \cL^c(T)$, since $U_w$ is finite and $\cL^c(f(Q)) \leq_e \cL^c(T)$, as argued above.
Thus $\cL(S) \leq^1_e \cL^c(T)$ via the following algorithm for each $w$: Using $\cL^c(T)$, enumerate $\cL^c(f(Q/U_w))$.  For each element, check if it is in $G$ (since $G$ is finite, this can be built into the $1$-enumeration operator) or $\cL(T)$.  The latter check requires information about a single bit of $\cL(T)$ at a time.  Thus, $\cL(S) \leq_e^1 \cL^c(T)$, and so $\cL^c(S) \leq^* \cL^c(T)$.
\qed\end{proof}

\subsection{Ziegler reducible implies finitely determined}

We now complete the proof of \cref{Zieglers thm full}.

\begin{lemma}
\label{thm: second half of Ziegler}
    Let $S$ and $T$ be subshifts with $\cL^c(S) \leq^* \cL^c(T)$.  Then $S$ is finitely determined over $T$.
\end{lemma}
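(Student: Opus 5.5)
We would mimic the forward direction of \cref{char of computable subshifts}, relativised: encode the two enumeration operators witnessing $\cL^c(S) \leq_e \cL^c(T)$ and $\cL(S) \leq_e^1 \cL^c(T)$ into a single effectively closed subshift, and then turn that into an SFT by a simulation theorem.

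\textbf{Step 1: the combined subshift.} Let $W_1$ witness $\cL^c(S) \leq_e \cL^c(T)$ and $W_2$ witness $\cL(S) \leq_e^1 \cL^c(T)$. Over the alphabet $\Delta = \Sigma_S \sqcup \Sigma_T$ (plus a separator symbol), in dimension $d_0 = \max(d_S,d_T)+1$, consider subshifts $P$ that carry a copy of a $T$-configuration on one layer and an $S$-configuration on another. We impose the following closure conditions on the co-language of $P$:
\begin{enumerate}
    \item Whenever $\langle w,u\rangle \in W_1$, then $D_u \subseteq \cL^c(P)$ implies $w \in \cL^c(P)$.
    \item Whenever $\langle w,u,v\rangle \in W_2$, $D_u \subseteq \cL^c(P)$ and $D^1_v \subseteq \cL(P)$, then $w \in \cL(P)$.
\end{enumerate}
The point is that these conditions are uniformly c.e., so the class of such $P$ can be cut out by an effectively closed ``master'' subshift $M$ in which the $T$-part is left free. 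For condition 2 we use a configuration that exhibits witnesses for the words in $D^1_v$, which is an $\exists$-type language condition.

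\textbf{Step 2: simulation.} By the simulation theorem of Durand--Romashchenko--Shen or Aubrun--Sablik, realise $M$ as a factor $f(R_0)$ of an SFT $R_0 = \pres{\Sigma}{B}^d$ in higher dimension. Then set $G$ to be finitely many words forcing nontriviality of the relevant layers, as in \cref{example:minimal sft fin det}.

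\textbf{Step 3: verification.} Suppose $R \models L(G) \land L^c(B)$ and $T \sqsubseteq f(R)$. Then $\cL^c(f(R))$ restricted to $\Sigma_T$ equals $\cL^c(T)$. Condition 1 then gives $\cL^c(S) \subseteq \cL^c(f(R))$. Condition 2 gives $\cL(S) \subseteq \cL(f(R))$, using that every single-negative query to $\cL^c(T)$ is answered correctly because of the full restriction. Hence $S \sqsubseteq f(R)$. For existence, take $R$ encoding $T \times S$ itself, which satisfies both closure conditions.

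\textbf{Main obstacle.} The hard step is enforcing condition 2 inside an SFT. It is an implication whose hypothesis mixes co-language (finitely many forbidden words) and language (one present word) and whose conclusion is that a word occurs. Forcing occurrence requires a minimality-type or uniform-recurrence mechanism, in the spirit of \cite[Corollary 27]{jeandel_characterization_2019}, so that the simulated computation, whenever it certifies $w$, actually produces $w$ somewhere. Meanwhile it must never produce words of $\cL^c(T)$. I would attempt this by having each computation zone take a sample from the $T$-layer: the sample is a single word $D^1_v$ appearing in the configuration. The zone checks, by searching the configuration, that the words of $D_u$ are absent, and if both checks pass it writes $w$ on the $S$-layer. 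Detecting absence is the delicate point, and it is exactly where the forbidden patterns $B$ and the enumeration of $\cL^c(T)$ must be combined.
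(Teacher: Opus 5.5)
\textbf{There is a genuine gap: the encoding step is missing.} Your outline matches the paper's: code the two enumeration operators into an effectively closed subshift, pass to an SFT by the simulation theorem, and let $B$ be its forbidden patterns. But the step that carries all the weight is absent. The assertion in Step~1 that conditions 1 and 2 ``are uniformly c.e., so the class of such $P$ can be cut out by an effectively closed master subshift'' does not hold as stated.

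Those conditions are about $\cL(P)$ and $\cL^c(P)$, i.e.\ about the whole subshift, whereas forbidden patterns constrain one configuration at a time through finite windows. Suppose a layer merely carries a copy of a $T$-configuration (or of an $S$-configuration). Then the absence of $D_u$ from $x$ is not visible in any finite window. Moreover, the configuration-level versions of your implications (``if $w$ occurs then some element of $D_u$ occurs''; ``if the word of $D^1_v$ occurs and no element of $D_u$ does, then $w$ occurs'') are not closed conditions. So they cannot be imposed by forbidden words on such layers, c.e.\ or not.

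Your ``main obstacle'' paragraph concedes this, and the proposed repair is not yet an argument. Local rules, even after simulation, cannot ``search the configuration''. A recurrence mechanism helps only if it makes occurrence and non-occurrence of a word decidable inside a window of computably bounded size, and that is exactly what has to be built. Likewise, a layer holding a copy of an $S$-configuration cannot be ``written'' to: forcing $w$ means forbidding its absence, which is the same non-local problem.

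\textbf{The missing ingredient is the oracle shift} of \cite{jeandel_characterization_2019} (from the proof of their Theorem~23), used on both sides. In dimension $d+1$, interleave $d$-dimensional $S$-rows and $T$-rows, synchronised by type. Rows of type $i$ recur every $2^i$ rows, list words on hypercubes of side $2^i-1$ separated by a safe symbol, are periodic with a computable period, and are linked across types by subword closure. The set of words of a given size occurring in a configuration is then displayed within one bounded window. So the axioms become forbidden finite patterns at the level of configurations:
\begin{itemize}
\item for $\langle w,u\rangle\in W_1$, forbid $w$ on an $S$-row of adequate type when no element of $D_u$ appears on the matching $T$-rows;
\item for $\langle w,u,v\rangle\in W_2$, forbid configurations in which the word of $D^1_v$ (if any) appears on a $T$-row, no element of $D_u$ appears, and $w$ does not appear.
\end{itemize}
This set is c.e., so the simulation theorem applies.

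The passage back to languages is where the form of $\leq_e^1$ matters. If $T\sqsubseteq f(P)$ and $D_u\subseteq\cL^c(T)$, then every configuration avoids $D_u$. In the first case no configuration contains $w$. In the second, any configuration containing the single word of $D^1_v\subseteq\cL(T)$ must contain $w$. With two positive queries there need not be a single configuration containing both words.

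Note also that once the constraints are configuration-level, a bare product ``$T\times S$'' is not an existence witness, since it pairs configurations arbitrarily. You want configurations whose $T$-rows display all of $\cL(T)$ and whose $S$-rows display all of $\cL(S)$.
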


The proof relies heavily on the construction of a subshift which acts as an oracle for the language of a given subshift.  This \emph{oracle shift} was introduced in the proof of Theorem 23 in \cite{jeandel_characterization_2019}, their analogue of the relativised Higman Embedding Theorem.  The other key ingredient is the Simulation Theorem in \cite{durand_effective_2010} and \cite{aubrun_simulation_2013}, which shows that every effectively closed shift of dimension $d$ is the $d$-dimensional slice of a factor of a $d+1$-dimensional SFT.

\subsubsection*{The oracle shift}
\label{sec:oracle}

Since it was sketched in \cite{jeandel_characterization_2019}, we provide details of the construction here.  It may be useful to also reference the proof of Theorem 23 in \cite{jeandel_characterization_2019}.  The idea of the oracle shift is that, given a $d$-dimensional shift $S$ in a finite alphabet, it effectively produces a $d+1$-dimensional shift $O_S$ that displays the language of $S$ in a uniform way.  The forbidden words of $O_S$ can be chosen as $F \cup \cL^c(S)$, where the set $F$ is c.e.  Further, $\cL(S) \leq_1 O_S$---and in fact, a stronger relation holds: $S \sqsubseteq O_S$.

Let $S$ be a $d$-dimensional subshift in the alphabet $\Sigma$ and let $\Sigma_{O_S} = \Sigma \cup \{\#\}$.  

The subshift $O_S$ can be thought of as a ``stack'' of $d$-dimensional shifts, which we, borrowing the terminology of the $d= 1$ case, call ``rows''.  Each $d$-dimensional row in this stack will be associated with a number $i \in \omega \cup \{\infty\}$ called the \emph{type} of the row.  Rows of type $i$ consist of words $w \in \cL(S)$ which are on hypercubes of length $2^{i}-1$, surrounded by a single layer of $\#$s on each side.  
These rows will be periodic, with period $2^{i} \cdot |\Sigma|^{2^{i}}$ in each of their $d$ dimensions.  

Further, the rows interact.  Each of the rows of type $i$ are identical, and appear every $2^{i}$ rows.  Thus every other row is a row of type $1$, and every other of the remaining rows is a row of type $2$, etc.  Moreover, each connected word in $\Sigma$ on a row of type $i$ appears as a subword of some word in the rows of type $i+1$.

The construction of $O_S$ is then essentially the following: first $O_{\pres{\Sigma}{}}$ is constructed, which has c.e.\ set of forbidden words.  $O_S$ is then obtained by further forbidding $\cL^c(S)$.

Say that a word $d$-dimensional word $w$ on alphabet $\Sigma_{O_S}$ is a \emph{valid pattern of type $i$} if its domain is a hypercube of length $2^i+1$ where the hypercube of length $2^i-1$ in the interior only has letters from $\Sigma$ and it is wrapped by a layer of $\#$, which we call the \emph{boundary} of the valid pattern. Each row of type $i$ will consist of only valid pattern of type $i$ with shared boundaries. Let $F$ be the set of forbidden $d+1$-dimensional words described below.  

\begin{description}
    \item[Building a grid:] For each $i$, forbid the words on hypercubes of length $2^{i}+1$ such that none of its rows is consistent with a valid pattern of type $i$.
  
    \item[Fixing the type:] 
    For each $i$, forbid the words that contain a valid pattern of type $i$ and something else inconsistent with a valid pattern of type $i$ in the same row.
    
    \item[Ensuring periodicity:] 
    For each $i$, forbid words with length-$2^{i-1}-1$-hypercubes of letters from $\Sigma$, surrounded by $\#$s, which are not periodic in one of the first $d$-dimensions with period length-$2^{i-1} \cdot |\Sigma|^{2^{d(i-1)}}$.
    
    \item[Recurrence of slices:] 
    For each $i$, forbid words that contain a valid pattern of type $i$ and anything other than the same valid pattern $2^{i}$ rows above it.
    \item[Subword-closed:] For each $i$, forbid words which contain a valid pattern of type $i$, which contains a subword of length $2^{i-1}-1$ not in a row of type $i-1$.
    
\end{description}

This c.e.\ set of forbidden words provides the scaffolding for the oracle shift.  Then $O_S = \pres{\Sigma, \#}{F \cup \cL^c(S)}^{d_S+1}$.

\subsubsection*{Proof of \cref{thm: second half of Ziegler}}

    Without loss of generality, suppose $\Sigma_S \cap \Sigma_T = \emptyset$.  Further, by considering $T^{(n)}$ or $S^{(n)}$ as appropriate (cf. \cref{def:new subshifts from old}), we may assume that $S$ and $T$ have the same dimension $d$.  Further, by \cref{char of computable subshifts}, we may assume $T \neq \emptyset$.

    Let $W_i$ witness that $\cL^c(S) \leq_e \cL^c(T)$ and $W_j$ witness that $\cL(S) \leq_e^1 \cL^c(T)$.  Then, for any $R$ with $T \sqsubseteq R$, we have $S \sqsubseteq R$ if and only if the following c.e.\ sets of implications hold:
    \[ I = \left\{ \bigwedge_{v \in D_n} v \in \cL^c(R) \limplies w \in \cL^c(R) \mid \langle w, n \rangle \in W_i \right \} \]
    \[ J = \left\{ \bigwedge_{v \in D_n} v \in \cL^c(R) \land D_m^1 \subseteq L(R) \limplies w \in \cL(R) \mid \langle w, n, m \rangle \in W_j  \right\}. \]

    We show that $I \cup J$ can be coded into a set of forbidden words in dimension $d + 1$.  The intuition is to modify the construction of the oracle shift described above by starting with a c.e.\ set of forbidden words that ensure the shift consists of interleaved $d$-dimensional rows from $O_{\pres{\Sigma_S}{}}$ and $O_{\pres{\Sigma_T}{}}$ (see \cref{fig:oracle}).  We can then add a further c.e.\ set of forbidden words that code the implications $I \cup J$  (see \cref{fig:axiom-tiles}). We assume that $O_{\pres{\Sigma_S}{}}$ and $O_{\pres{\Sigma_T}{}}$ have distinct safe symbols. That it, their alphabets are $\Sigma_S\cup\{\#\}$ and $\Sigma_T\cup\{\%\}$, respectively.  Further, recalling that our convention that the domains of words be rectangles, write $|w|$ for the longest side of $w$.

    Let $\cF$ be the set of forbidden words on alphabet $\Sigma_S\cup\Sigma_T\cup\{\#,\%\}$ as follows.
    
        \begin{description}
        \item[Alternation of $S$-rows and $T$-rows:] Forbid words that do not consist of a word in $\Sigma_{O_S}$ directly below (in the last dimension) a word in $\Sigma_{O_T}$. 
        \item[Synch the $S$-rows and the $T$-rows:] Forbid words to ensure that $S$-rows of type $i$ sit directly above $T$-rows of type $i$.
        \item[Oracle for $T$:] The forbidden words that ensure the $T$-rows are configurations of $O_T$ (see \cref{sec:oracle}).
        \item[Oracle for $S$:] Forbidden words to ensure that the $S$-rows are of the form of configurations of an oracle shift in the alphabet $\Sigma_{O_S}$ (see \cref{sec:oracle}).
        \item[Coding $I$:]\label{forb:I} For each $\langle w,u \rangle \in I$, forbid all configurations containing $w$ on an $S$-row for words of type $ \geq \log_2(|w|+1)$ but no element of $D_u$ in the corresponding places on the $T$-rows (see \cref{fig:axiom-tiles}).
        \item[Coding $J$:]\label{forb:J} Forbid all configurations that have no $w$ on an $S$-row in the appropriate type, no $v \in D_n$ in the corresponding places in the oracle rows and which do have $D_m^1$ in the corresponding place in the oracle rows  (see \cref{fig:axiom-tiles}).
    \end{description} 

    \begin{wrapfigure}{r}{0.45\textwidth}
        \includegraphics[width=.45\textwidth,alt={A window of the combined oracle shift.}]{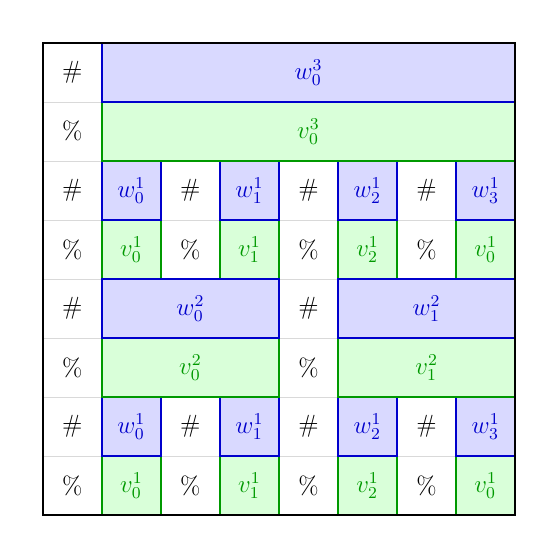}
        \caption{A window of the combined oracle shift.  The green words are from $\cL(T)$ while the blue are from $\cL(S)$.  The superscript on each word represents the \emph{type} of the row.  By subword closure, all the subwords of each word of length $2^i-1$ appear in the rows of type $i-1$.}
        \label{fig:oracle}
   \end{wrapfigure}

    Note that, while in the coding of $I$ and $J$ we are forbidding configurations (instead of words), thus can be accomplished by finite patterns because of the horizontal periodicity of $O_T$ and $O_S$, and the regularity of the rows of type $i$. In fact, for each implication $\langle w,u\rangle\in W_i$ or $ \langle w,n,m\rangle \in W_j$ it is more than enough to look at hypercubes of length $2^{l} \cdot |\Sigma|^{2^{d(l)}}$, where the longest word in the implication has length $l$. Moreover, observe that $\cF$ is c.e.

    Let $Q = \pres{\Sigma_S \cup \Sigma_T \cup \{\#,\%\}}{\cF}^{d+1}$.  Then $Q$ is effectively closed, so by Theorem 1 of \cite{durand_effective_2010} or Theorem 3 of \cite{aubrun_simulation_2013}, there is a SFT $R$ in $d+2$ dimensions and a factor map $f: \Sigma_R^{\Z^{d+2}} \to \Sigma^{\Z^{d+2}}_Q$ such that $Q = f(R) \restriction_{d+1}$.  Let $B$ be the finite set of forbidden words of $R$ and $\hat f$ a finite block code for $f$.  We claim that $S$ is finitely determined by $(\Sigma_R, d+2, \emptyset, B, \hat f)$ over $T$.
    
    Indeed, suppose that $P$ is a subshift in dimension $d+2$ and alphabet $\Sigma_R$ with $B \subseteq \cL^c(P)$.  Let $P_0 = f(P) \restriction_{d+1}$ and assume $T \sqsubseteq P_0$.  Then, $P_0$ has more forbidden words than $f(R)$.  In particular, it contains the forbidden words above, which force $P_0$ to be a subset of the oracle shift.  Let $w \in \cL(S)$.  Then there is an axiom $\langle w, n, m \rangle \in W_j$ such that $D_n \subseteq \cL^c(T)$ and $D^1_m \subseteq \cL(T)$.  Let $D_m^1 = \{v\}$.  Since $T \sqsubseteq f(P)$, take a configuration $x$ containing $v$, which must not contain any word in $D_n$.  Thus, by forbidden words coding $J$, $w$ must appear in $x$.

    \begin{figure}
        \centering
        \includegraphics[width=.49\textwidth,alt={One tile that exemplifies the coding of $I$}]{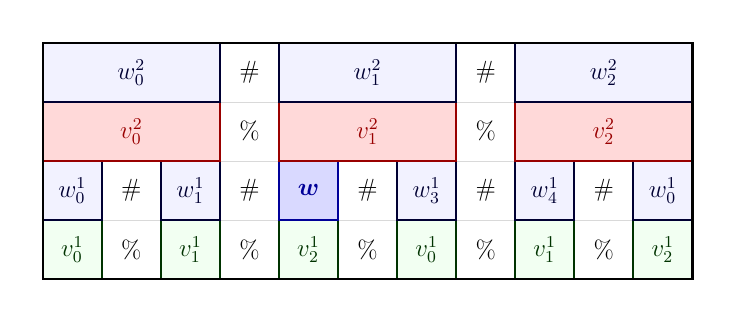}
        \includegraphics[width=.49\textwidth,alt={One tile that exemplifies the coding of $I$}]{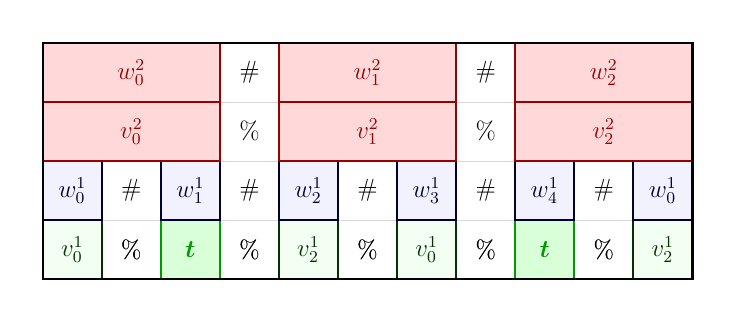}
        \caption{Example of tiles coding the axioms of $I$ (left) and $J$ (right). The $I$-tile represents and axiom $\langle w, u\rangle$ where $D_u$ only has words of type 2 and are not any $v^2_i$ (red row). The $J$-tile represents and axiom $\langle w, u, v\rangle$ where $w$ and all the words in $D_u$ have type 2 and do not appear in the corresponding row (red) and $D^1_v=\{t\}$ with $t$ of type 1. }
        \label{fig:axiom-tiles}
    \end{figure}
    
    For the other direction, let $w \in \cL^c(S)$.  This is witnessed by an enumeration axiom of the form $\langle w,u \rangle \in W_i$, where $D_u \subseteq \cL^c(T)$.  Then, \emph{on every configuration} of $P_0$, $D_u \subseteq \cL^c(x)$.  Since, since $P_0$ is a subset of the oracle shift, and forbids the words coding $I$, it must be that $w$ does not appear on any configuration, and hence $w \in \cL^c(P_0)$.

    This completes the proof of \cref{thm: second half of Ziegler}.

\begin{remark}
    It is worth noting that where Ziegler reduction features in this argument.  Indeed, if we required more than the single piece of negative information in $\leq_e^1$, then the proof would not go through.

    More precisely, it is important that the Ziegler axioms can be formulated as \emph{positive, quantifier-free Horn formulas}; i.e., formulas of the form $\bigwedge_{u \in D} L^c(u) \implies L^c(w)$.
    The key use of this fact is in the coding of $J$: each configuration in the oracle is set up to contain a subset of the forbidden words of $T$.  However, it is not necessarily the case that if $u$ and $v$ are both words in $\cL(T)$, that they must appear on the \emph{same} configuration.  For example, after passing through the factor map, it is possible that in $P_0$, every word containing both $u$ and $v$ is forbidden, even though $u,v \in \cL(P_0)$.  In this case, if we needed to see \emph{both} words in order to verify that $w \in \cL(S)$, it could be that $w \notin \cL(P_0)$. 
\end{remark}

\section{Existential closure}

Ziegler originally introduced $\leq^*$ in the context of \emph{existentially closed groups}.  
Say that a group $M$ is \emph{existentially closed} (ec) if for any quantifier-free $\varphi(\tuple{x}, \tuple{m})$, where $\tuple{m} \subseteq M$, if there is $N \geq M$ with $N \models \exists \tuple{x} \varphi$, then $M \models \exists \tuple{x} \varphi$.

Then Ziegler showed in \cite{ziegler_algebraisch_1980} that for finitely generated groups $G$ and $H$, $W(G) \leq^* W(H)$ iff $G$ embeds into every ec group that $H$ embeds into iff $G$ is $\exists_1$-isolated over $H$ (\cref{def:exists-isolated}; see also \cite[Chapter 4]{NakidCorderoComputability2026} for more detail on the background).  Thus there is an equivalence between (1) relative effective content, (2) relative embeddability in an algebraic structure, and (3) relative $\exists_1$-definability for groups.

Notably, Belegradek \cite{belegradek_higmans_1996} gave conditions for Ziegler's theorem to hold in a general algebraic context. In \cref{Zieglers thm full}, we translate the equivalence between (1) and (3) into the theory of subshifts.  In this section, we introduce a notion of existential closure for subshifts, namely being \emph{existentially closed for consistent systems}, in parallel to the definition for groups (\cref{def:existentially closed for consistent systems}).  

Subshifts which are existentially closed for consistent systems have some commonality with existentially closed groups (for example, both classes are amenable to model-theoretic forcing arguments), but the class nonetheless fails to satisfy an analogue of Ziegler's Theorem---indeed, there are no subshifts $S$ such that $S \sqsubsetsim M$ for every $M$ which is existentially closed for consistent systems.

\begin{definition}
    Let $U$ be a set of words.  We write $U(\tuple{a})$ to denote that the range of each word in $U$ is contained in $\tuple a$.  Further, we write $U(\tuple x, \tuple a)$ to highlight that $\tuple a$ comes from some given alphabet $\Sigma$ and $\tuple x$ comes from an alphabet disjoint from $\Sigma$, interpreted as ``variables''.

    Let $T$ be a subshift.  Then $\exists \tuple x\ (L(G(\tuple x, \tuple a)) \land L^c(B(\tuple x, \tuple a)))$ is \emph{consistent with $T$} if $\tuple a \subseteq \Sigma_T$ and there is $S \sqsupseteq T$ with $\tuple b \subseteq \Sigma_S$ such that $S \models L(G(\tuple b, \tuple a)) \land L^c(B(\tuple b, \tuple a))$.  In this case we say that $L(G(\tuple x, \tuple a)) \land L^c(B(\tuple x, \tuple a))$ is \emph{satisfied} in $S$.  

    If there is $T$ with alphabet $\Sigma$ which satisfies $L(G(\tuple x, \tuple a)) \land L^c(B(\tuple x, \tuple a))$, then say that $(G(\tuple x, \tuple a), B(\tuple x, \tuple a))$ is a consistent pair for $\Sigma$.
\end{definition}

\begin{remark}
    The alphabet $\Sigma$ is important in this last condition: $(\{ a \}, \{ aa \})$ is not a consistent pair for $\Sigma = \{a\}$ but is for $\Sigma = \{a,\#\}$, as witnessed by the set of strings in $\{a, \#\}^\Z$ with no repeated $a$s.
\end{remark}

The following proposition shows that it is straightforward to check if $L(G(\tuple x, \tuple a)) \land L^c(B(\tuple x, \tuple a))$ is consistent with $T$.

\begin{proposition}\label{sufficient condition for consistency}
    Let $T$ be a subshift.  Then $\exists \tuple x\ (L(G(\tuple x, \tuple a)) \land L^c(B(\tuple x, \tuple a)))$ is consistent with $T$ iff $G \cap \cL^c(T) = \emptyset$ and for every $w \in G$ and $v \in B$, $w$ does not contain $v$ as a subword.  Further, if $\exists \tuple x\ (L(G(\tuple x, \tuple a)) \land L^c(B(\tuple x, \tuple a)))$ is consistent with $T$, then it can be satisfied in a subshift in the same dimension as $T$.

    Moreover, $(G(\tuple x, \tuple a), B(\tuple x, \tuple a))$ is a consistent pair for $\Sigma$ with $|\Sigma \setminus \tuple a| > 1$ if and only if for every $w \in G$ and every $v \in B$, $w$ does not contain $v$ as a subword.
\end{proposition}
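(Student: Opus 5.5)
Necessity is immediate in both parts. Suppose $S \sqsupseteq T$ satisfies $L(G(\tuple b,\tuple a)) \land L^c(B(\tuple b,\tuple a))$. If some $w \in G$ contained some $v \in B$ as a subword, then $w \in \cL(S)$ would force $v \in \cL(S)$, which contradicts $v \in \cL^c(S)$. Next take $w \in G$. If $w$ only uses letters of $\Sigma_T$ and lies in the dimension of $T$, then $w\in\cL(S)$ together with $\cL(S)\cap\Sigma_T^{*}=\cL(T)$ gives $w\notin\cL^c(T)$. I will read the condition $G\cap\cL^c(T)=\emptyset$ as referring to exactly these words $w\in G(\tuple a)$. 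For the consistent-pair statement no $T$ is involved, so only the subword condition is needed.

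For sufficiency I plan to construct $S$ explicitly in dimension $d=d_T$. First I choose fresh letters $\tuple b$ for $\tuple x$, together with one more fresh separator symbol $\#$. Every word in $G$ that involves some $b_i$ (or is not in $\cL(T)$ in any other sense) will be placed as an isolated island. Concretely, $S$ is the union of $T$ with the closure of the shift orbits of configurations in which a single copy of $w\in G$ sits in a sea of $\#$ (one such configuration for each $w\in G\setminus \cL(T)$). This union is closed and shift-invariant, being a finite union of closed invariant sets, and it stays in dimension $d$. Its language consists of $\cL(T)$, the subwords of the $\#$-padded copies of words in $G$, and the all-$\#$ words.

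I then check the required properties in turn. We have $T\sqsubseteq S$, since any word over $\Sigma_T$ that appears in an island configuration is a subword of some $w\in G$ over $\Sigma_T$; by hypothesis such a $w$ lies in $\cL(T)$, and then its subwords do too. Each $w \in G$ lies in $\cL(S)$ by construction. A word $v\in B$ can appear in $S$ in three ways. It cannot appear inside $T$, because any $v \in B$ using only $\Sigma_T$ would have to lie in $\cL^c(T)$; this requires adding the hypothesis that $B(\tuple a)\subseteq \cL^c(T)$, or else interpreting consistency only for $B$-words that involve variables, and this is the delicate point to sort out against the statement. It cannot appear as a subword of some $w\in G$, by the subword hypothesis. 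Finally it cannot appear straddling $\#$, since $\#$ does not occur in $B$.

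For the consistent-pair part, with $|\Sigma\setminus\tuple a|>1$, I would rerun the same construction inside the alphabet $\Sigma$, taking $T=\emptyset$. The variables $\tuple x$ are interpreted by one letter $b\in\Sigma\setminus\tuple a$, and a second letter $c$ plays the role of $\#$. This is the step I expect to be the main obstacle. Because the variables $\tuple x$ may all be forced onto the same letter $b$, and because $c$ may occur in $B$, I have to make sure that the padding never creates a $B$-word. My first attempt is to map distinct variables to $b$ only when that is harmless. If it is not harmless, I would instead replace the constant padding by a periodic pattern built from $b$ and $c$, chosen so that it is long enough not to contain any $v\in B$.
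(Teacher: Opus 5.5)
Your construction for the first part is the paper's own: fresh letters $\tuple b$ for the variables, a fresh separator $\#$, islands $w(\tuple b,\tuple a)$ in a sea of $\#$, and the least subshift containing $T$ and the islands. You are right that every variable-free word of $B$ must lie in $\cL^c(T)$. The paper's one-line verification never checks $B(\tuple b,\tuple a)\subseteq\cL^c(S)$, and the statement genuinely needs this hypothesis.

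However, your verification of $T\sqsubseteq S$ rests on a false claim. A word over $\Sigma_T$ occurring in an island need not be a subword of a word of $G$ over $\Sigma_T$. It can be a variable-free subword of a word of $G$ that does contain variables. For instance, let $T=\pres{\{a,c\}}{aa}^1$, with one variable $x$, $G=\{xaa\}$ and $B=\emptyset$. Then $G\cap\cL^c(T)=\emptyset$ (since $xaa$ is not a word over $\Sigma_T$) and the subword condition is vacuous. Yet your island $\cdots\#\#baa\#\#\cdots$ puts $aa\in\cL^c(T)$ into $\cL(S)$. In fact no $S\sqsupseteq T$ satisfies $L(baa)$ for any letter $b$, since $aa\in\cL(S)\cap\Sigma_T^{*1}=\cL(T)$ would follow. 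So the `if' direction is false as written, and reading the hypothesis as being about the variable-free words of $G$ does not rescue it.

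The condition that makes the equivalence true has three parts:
\begin{itemize}
\item every variable-free subword of a word of $G$ lies in $\cL(T)$;
\item every variable-free word of $B$ lies in $\cL^c(T)$;
\item no word of $B$ is a subword of a word of $G$.
\end{itemize}
Your necessity argument, applied to subwords, shows this is necessary. With it your island argument goes through, because ($\tuple b$ and $\#$ being fresh) the $\Sigma_T$-words in an island are exactly the variable-free subwords of the corresponding $w$.

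For the `moreover' part, the obstacle you identify cannot be removed by changing the padding, and the claim is false under the sole assumption $|\Sigma\setminus\tuple a|>1$. In a small alphabet distinct variables must be identified, and identification can turn a word of $G$ into a word of $B$ whatever surrounds it. Take $\Sigma=\{p,q\}$, $\tuple a=\emptyset$, $G=\{x_1x_2,x_1x_3,x_2x_3\}$ and $B=\{x_1x_1,x_2x_2,x_3x_3\}$. No word of $B$ is a subword of a word of $G$. Yet every $\tuple b\subseteq\{p,q\}$ has $b_i=b_j$ for some $i<j$, and then $b_ib_j$ would have to lie in both $\cL(T)$ and $\cL^c(T)$. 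With $|\Sigma|+1$ variables the same pigeonhole example defeats every finite $\Sigma$. Your other worry, that the separator $c$ might occur in $B(\tuple b,\tuple a)$, cannot arise unless a variable is sent to $c$.

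What is true is the version with $|\Sigma\setminus\tuple a|\geq|\tuple x|+1$, which covers the forcing construction where $\Sigma$ is infinite. Send the variables injectively to letters outside $\tuple a$, use one further letter as the separator, and rerun your first-part argument with $T=\emptyset$. An injective letter substitution that fixes $\tuple a$ and whose new letters avoid $\tuple a$ reflects the subword relation, so no word of $B$ appears. The paper's ``the same argument works'' tacitly assumes enough letters in the same way.
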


\begin{proof}
    Suppose that $G$ and $B$ satisfy the hypothesis of the proposition.
    Let $\Sigma = \Sigma_T \cup \{\#\} \cup \{ \tuple b\}$, where $|\tuple b| = |\tuple x|$.  Then consider the configuration $y$ consisting of, for each $w(\tuple x, \tuple a) \in G$, the word $w(\tuple b, \tuple a)$ surrounded by $\#$.  For example, in one dimension, if $G = \{ a_0x, a_1xa_0 \}$, then $y$ is the configuration
    \[ \cdots \#\#\#a_0b\#a_1ba_0\#\#\#\cdots \]
    Let $S$ be the least subshift in alphabet $\Sigma$ which is a superset of $T \cup \{y\}$.  Notice $S$ has dimension $d_T$, and it is evident from the construction that $T \sqsubseteq S$.

    The same argument works for the second part of the claim.
\end{proof}

\begin{definition}
\label{def:existentially closed for consistent systems}
	A subshift $M$ in alphabet $\Sigma$ is \emph{existentially closed for consistent systems (in dimension $d$)}, or \emph{ecfcs}, if for every pair of finite sets $U(\tuple{x}, \tuple{a})$ and $V(\tuple{x}, \tuple{a})$ of words in $(\Sigma \cup X)^{*d}$, if $\exists \tuple x\ (L(U(\tuple x, \tuple a)) \land L^c(V(\tuple x, \tuple a)))$ is consistent with $M$, then $L(U(\tuple x, \tuple a)) \land L^c(V(\tuple x, \tuple a))$ is satisfied in $M$.
\end{definition}

\begin{remark}
    We will show below that if $M$ is existentially closed for consistent systems, then $\Sigma_M$ must be infinite.  From this we lose the compactness of the space $\Sigma^{\Z^d}$, which means that many classical results, like the Curtis--Hedlund--Lyndon Theorem, are not available in this setting.
\end{remark}

We start by verifying that existentially closed subshifts for dimension $d$ exist.

\begin{theorem}\label{ec for consistent systems exist}
    For every $d$, there is a subshift that is ecfs in dimension $d$.
\end{theorem}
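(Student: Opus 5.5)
We will build $M$ as the union of a $\sqsubseteq$-chain $M_0 \sqsubseteq M_1 \sqsubseteq M_2 \sqsubseteq \cdots$ of $d$-dimensional subshifts, in the style of the classical construction of existentially closed groups. Fix a countably infinite reservoir of fresh symbols $\{c_0, c_1, \ldots\}$. Let $M_0$ be any nonempty $d$-dimensional subshift, say the full shift on $\{\#\}$. Since each $M_n$ will have a countable alphabet, there are only countably many pairs $(U(\tuple x, \tuple a), V(\tuple x, \tuple a))$ of finite sets of $d$-dimensional words with parameters $\tuple a$ from $\bigcup_n \Sigma_{M_n}$. We dovetail through them with a bookkeeping function that returns to every pair infinitely often, so each pair is handled at some stage after all of its parameters have appeared.

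At stage $n$, take the scheduled pair $(U,V)$ with $\tuple a \subseteq \Sigma_{M_n}$. If $\exists \tuple x\,(L(U) \land L^c(V))$ is consistent with $M_n$, apply \cref{sufficient condition for consistency}: it gives a witness $S \sqsupseteq M_n$ in the same dimension $d$, obtained by adjoining fresh letters $\tuple b$ (drawn from the reservoir, together with a fresh separator) and the closure of a single configuration displaying the words of $U(\tuple b, \tuple a)$. Set $M_{n+1} = S$. If the system is not consistent, set $M_{n+1} = M_n$. Define $M$ by $\Sigma_M = \bigcup_n \Sigma_{M_n}$ and $\cL(M) = \bigcup_n \cL(M_n)$. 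We must check that this set of words is the language of a subshift $M$ with $M_n \sqsubseteq M$ for all $n$. Any word over $\Sigma_M$ uses only finitely many letters, so it lies in some $\Sigma_{M_n}^{*d}$, and membership in $\cL(M)$ is then decided at that stage and never changes, because $\cL(M_m) \cap \Sigma_{M_n}^{*d} = \cL(M_n)$ for $m \geq n$. The resulting language is factorial and extendable, hence it is the language of a (closed, shift-invariant) subshift over the infinite alphabet. This is the step where the loss of compactness noted in the preceding remark matters: we need that every extendable factorial language on $\Z^d$ determines a nonempty subshift with exactly that language. That follows by a König-style argument applied to the finitely many letters occurring in any given finite chain of words, or more simply by taking $M$ to be the closure of $\bigcup_n M_n$ and checking the language directly.

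To verify that $M$ is ecfcs, suppose $\exists \tuple x\,(L(U) \land L^c(V))$ is consistent with $M$. Then \cref{sufficient condition for consistency} reduces consistency to two finitary conditions: $U \cap \cL^c(M) = \emptyset$ on the parameter-only words, and no word of $U$ contains a word of $V$. Both conditions transfer to $M_n$ for every large $n$, because $M_n \sqsubseteq M$. So at a later stage where $(U,V)$ is scheduled, the system was consistent with $M_n$, and it was satisfied in $M_{n+1}$ by some $\tuple b$. Satisfaction is itself a statement about finitely many words, namely $U(\tuple b,\tuple a) \subseteq \cL$ and $V(\tuple b,\tuple a) \subseteq \cL^c$, and it is preserved upward to $M$ because $M_{n+1} \sqsubseteq M$.

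I expect the main obstacle to be the limit step, and specifically one subtle point within it. The characterisation in \cref{sufficient condition for consistency} applies to finite alphabets, or at least needs a spare letter, so we have to check that it survives when $\Sigma_M$ is infinite. We also have to confirm that consistency with $M$ is genuinely equivalent to consistency with cofinitely many $M_n$. If the proposition's criterion only gives one direction, the fallback is direct: given a witness $S \sqsupseteq M$, the restriction of $S$ to $\Sigma_{M_n} \cup \tuple b$ and fresh letters already witnesses consistency with $M_n$, since $M_n \sqsubseteq M \sqsubseteq S$.
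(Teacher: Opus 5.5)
Your proof is correct, but it takes a different route from the paper's.

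\textbf{What the paper does.} It runs a finite-forcing construction. The infinite alphabet $\Sigma=\{a_i\}$ is fixed in advance, and the construction accumulates finite sets $G_s$ of required words and $B_s$ of forbidden words. It adds a pair $(F_j,F_k)$, with fresh letters for the variables, whenever the finite condition $L(G_s\cup F_j)\land L^c(B_s\cup F_k)$ is satisfiable at all. Only at the end does it set $M=\pres{\Sigma}{B}^d$. Because $B$ keeps growing, it must pad each $w\in G_s$ to larger words $w^+$ avoiding $B_{s+1}$, so that $G\subseteq\cL(M)$ survives the limit; it also obtains the genericity statement $G=\cL(M)$.

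\textbf{What you do instead.} You build a $\sqsubseteq$-chain of actual subshifts, in the style of the union-of-chains construction of e.c.\ groups. You test consistency against all of $M_n$ and realise each consistent system by the fresh-letter, fresh-separator witness from the proof of \cref{sufficient condition for consistency}. The direction you need there, from consistency to such a witness, is sound, because any witness already forces the relevant finitary conditions on $M_n$.

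\textbf{What each route buys.} Your route makes the limit trivial. $M_n\subseteq M_{n+1}$ as sets of configurations, the closure of $\bigcup_n M_n$ has language exactly $\bigcup_n\cL(M_n)$, and $M_n\sqsubseteq M$ for all $n$, so no $w^+$ padding is needed. Starting the chain at $M_0=S$ also gives the next proposition immediately, whereas the paper's sketch has to begin from the infinite sets $\cL(S)$ and $\cL^c(S)$. The cost is bookkeeping for the growing alphabet and stages that query an infinite object. The paper's finite conditions are what make it easy to interleave further requirements, as in the diagonalisation inside the proof of \cref{no universal substructure for ecfcs} or the tree construction of continuum many ecfcs subshifts.

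\textbf{Two fixes in your verification step.} First, make the transitivity argument your main line. If $S\sqsupseteq M$ witnesses consistency, then $M_n\sqsubseteq M\sqsubseteq S$, so $S$ itself witnesses consistency with $M_n$ once $\tuple a\subseteq\Sigma_{M_n}$. This avoids relying on the exact finitary criterion of the proposition, which as stated is incomplete; for instance, it omits that parameter-only words of $V$ must already be forbidden. Second, drop the suggestion of restricting $S$ to a smaller alphabet. $\cL(S\cap\Delta^{\Z^d})$ can be strictly smaller than $\cL(S)\cap\Delta^{*d}$, so such a restriction need not be a $\sqsupseteq$-extension of $M_n$.
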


The proof is a simple forcing argument.  Because of its simplicity and flexibility, we expect that this construction may have other applications.  As examples, we present some properties of subshifts which are ecfcs and which are obtained by straightforward modifications of the following proof.

\begin{proof}
    Let $\Sigma=\{a_i\mid i<\omega\}$ be an infinite alphabet 
    and let $\{F_k\mid k<\omega\}$ be an effective enumeration of the finite subsets of $\left( \Sigma \cup X \right)^{*d}$, where $X$ is an infinite subset, disjoint from $\Sigma$, thought of as ``variable letters''.  
    
    We build by finite approximation sets $G$, $B$ of words in $\Sigma$ such that $G \subseteq \cL(\pres{\Sigma}{B})$.  $\pres{\Sigma}{B}$ will be the required subshift.

    \textit{Stage 0}: $G_0 = \varnothing$; $B_0 = \varnothing$.

    \textit{Stage s + 1}: Let $s = \langle j,k \rangle$ and consider the pair $(F_j, F_k)$.  If there is $T$ with $\Sigma_T \supseteq \Sigma$ that satisfies $L(G_s(\tuple a) \cup F_j(\tuple x, \tuple a)) \land L^c(B_s(\tuple a) \cup F_k(\tuple x, \tuple a))$, let $\tuple b \subseteq \Sigma \setminus \tuple a$ and set $G_{0,s+1} = G_s(\tuple a) \cup F_j(\tuple b, \tuple a)$ and $B_{s+1} = B_s(\tuple a) \cup F_k(\tuple b, \tuple a)$.  Then, let $G_{s + 1}$ be obtained from $G_{0,s + 1}$ by adding, for each $w \in G_{0,s + 1}$ a word $w^+$ in $\Sigma$ that contains $w$ as a subword such that the taxicab metric from any letter in $w$ to the edge of the word is $\geq s$ and such that $w^+$ does not contain as a subword any word from $B_{s+1}$.  We check below that, for every $s$, $G_{0,s+1} \subseteq \pres{\Sigma}{B_{s+1}}^d$, so, looking at a point of $\pres{\Sigma}{B_{s+1}}^d$, we may find words with arbitrarily large domains containing $w$ and not containing any element of $B_{s+1}$; hence we will always be able to find such $w^+$.

    This ends the construction.  Let $G = \bigcup_{s < \omega} G_s$ and $B = \bigcup_{s < \omega} B_s$, and let $M = \pres{\Sigma}{B}^d$.
    
    \smallskip

    We now verify that $G = \cL(M)$ and that $M$ is ecfcs.

    We start by showing that for each $s$, $G_s \subseteq \cL(\pres{\Sigma}{B_s}^d)$.  This is evidently true for $s = 0$.  Suppose that $G_s \subseteq \pres{\Sigma}{B_s}^d$.  If there is no $T$ with $\Sigma_T \supseteq \Sigma$ that satisfies $(L(G_s(\tuple a) \cup F_j(\tuple x, \tuple a) \land L^c(B_s(\tuple a) \cup F_k(\tuple x, \tuple a))$, then $B_{s+1} = B_s$ and so $G_s \subseteq \pres{\Sigma}{B_{s+1}}$ and we may pick $w^+$ so that $G_{s+1} \subseteq \cL(\pres{\Sigma}{B_{s+1}})$.  If there is such $T$, then by \cref{sufficient condition for consistency}, $(G_{0,s+1}, B_{s+1})$ is a consistent pair for $\Sigma$, so $G_{0,s+1} \subseteq \cL(\pres{\Sigma}{B_{s+1}})$.  Thus, we may ensure that $G_{s+1} \subseteq \cL(\pres{\Sigma}{B_{s+1}})$.

    Now we verify that $G = \cL(M)$.  Let $w \in G$.  Then $w \in G_s$ for some $s < \omega$.  Thus, by the second part of each stage, we obtain a sequence of words such that $w$ is a subword of $w_r$ is a subword of $w_t$ for every $s \leq r \leq t$.  Then, since each $w_t$ does not contain a subword from $B_t$, the limit word (obtained by fixing some element of $w$ at the origin) is in $\pres{\Sigma}{B}$.  On the other hand, let $w \in \cL(M)$ and $s = \langle j,k \rangle$ where $F_j = \{w\}$ and $F_k = \emptyset$.  Then, $M$ witnesses that $L(G_s \cup F_j) \land L^c(B_s)$ is consistent, so $w \in G_{s+1} \subseteq G$.

    Finally, we check that $M$ is ecfcs.  Let $U(\tuple{x}, \tuple{a})$ and $V(\tuple{x}, \tuple{a})$ be finite sets of words with $\tuple a \subseteq \Sigma$ and such that $\exists \tuple x\ (L(U) \land L^c(V))$ is consistent with $M$, as witnessed by $T \sqsupseteq M$.  Let $j,k$ be such that $F_j = U$ and $F_k = V$.  Then $T$ satisfies $(L(G_s(\tuple a) \cup F_j(\tuple x, \tuple a) \land L^c(B_s(\tuple a) \cup F_k(\tuple x, \tuple a))$, so $G_{s+1} \supseteq G_s(\tuple a) \cup F_j(\tuple b, \tuple a)$ and $B_{s+1} = B_s(\tuple a) \cup F_k(\tuple b, \tuple a)$.  Since $G_{s+1} \subseteq G \subseteq \cL(M)$, we have that $M$ satisfies $L(U) \land L^c(V)$, as required.
\qed\end{proof}

The next proposition states that any subshift is $\sqsubseteq$-embedded in one that is existentially closed for consistent systems.

\begin{proposition}
    For any subshift $S$, there is a subshift $M \sqsupseteq S$ with $M$ ecfcs.
\end{proposition}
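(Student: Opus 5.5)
We run the forcing construction from the proof of \cref{ec for consistent systems exist}, but seeded with $S$. Write $d = d_S$ and take $\Sigma = \Sigma_S \cup \{a_i \mid i<\omega\}$, where the $a_i$ are fresh letters. We start at Stage $0$ with
\[ G_0 = \emptyset, \qquad B_0 = \cL^c(S) \cup \{ w \in \Sigma^{*d} \mid w \text{ uses some letter of } \Sigma_S \text{ and some letter outside } \Sigma_S\}. \]
These two sets cannot be added at once. $\cL(S)$ is infinite and is not a finite approximation, so we instead interleave it: at certain stages we put the next word of $\cL(S)$ into $G$. The ecfcs stages are handled as before, with one change. The consistency test at stage $s+1$ asks for a $T \sqsupseteq S$ (with $\Sigma_T \supseteq \Sigma$) satisfying $L(G_s \cup F_j(\tuple x,\tuple a)) \land L^c(B_s \cup F_k(\tuple x,\tuple a))$. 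When the test succeeds, we substitute fresh letters $\tuple b$ from $\{a_i\}$ that do not yet occur in $G_s \cup B_s$, and we add the padded words $w^+$ exactly as before. Then $M = \pres{\Sigma}{B}^d$ with $B = \bigcup_s B_s$.

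We need to maintain the following invariant: there is a subshift $T_s \sqsupseteq S$ with $T_s \models L(G_s) \land L^c(B_s)$. Concretely we keep $T_s = S \cup \pres{\Sigma}{B_s}^d$ restricted to configurations that avoid $\Sigma_S$. The mixing words in $B_0$ keep $S$-letters and new letters in separate configurations. As a result, the language of $T_s$ restricted to $\Sigma_S$ is exactly $\cL(S)$, which gives $S \sqsubseteq T_s$. The fresh-letter substitution and the padding step use \cref{sufficient condition for consistency} in the same way as in the earlier proof.

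We then check three things.
\begin{itemize}
\item[(i)] $S \sqsubseteq M$. We have $\cL^c(S) \subseteq B$, and every $w \in \cL(S)$ is eventually placed in $G \subseteq \cL(M)$. The limit-word argument from the earlier proof shows that $G \subseteq \cL(M)$ still holds, since each padded $w^+$ avoids $B_t$.
\item[(ii)] $\cL(M) = G$. This is argued as before.
\item[(iii)] $M$ is ecfcs. If a system is consistent with $M$ via some $T \sqsupseteq M$, then by transitivity $T \sqsupseteq S$. Hence $T$ witnesses the stage test at the stage $\langle j,k\rangle$ coding that system, and the system is realised in $G \cup B$.
\end{itemize}

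The main obstacle is the tension between ecfcs and $\sqsubseteq$. A consistent system may require a word $w$ over $\Sigma_S$ to lie in $\cL^c$ even though $w \in \cL(S)$. The test rules this out, because we require the witness $T$ to satisfy $T \sqsupseteq S$. We must also make sure that the forbidden words $B_0$ do not destroy consistency for the systems that matter. This is subtle: forbidding mixed words might itself make some system inconsistent with $M$ that would otherwise be consistent.

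My first fallback is to drop the mixing prohibition from $B_0$ and instead keep only $\cL^c(S)$ fixed. We would then verify directly, in each stage, that the required $T_s$ can be taken to be the closure of $S$ together with a single configuration built as in \cref{sufficient condition for consistency}. That configuration separates new words with a fresh padding symbol drawn from $\{a_i\}$, and this is exactly what preserves $S \sqsubseteq T_s$.
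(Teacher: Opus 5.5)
Your primary construction fails. Putting every mixed word into $B_0$ rules out ecfcs whenever $S \neq \emptyset$.

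Pick $a \in \Sigma_S$ with $a \in \cL(S)$ and consider $\exists x\,(L(ax) \land L^c(\{xc \mid c \in \Sigma_S\}))$, with $ax$ a horizontal two-letter word. This system is consistent with any $M \sqsupseteq S$. To see this, adjoin to $M$ the orbit closure of a configuration that is $\#$ everywhere except for one occurrence of $ab$, where $b$ and $\#$ are letters outside $\Sigma$. The only new word over $\Sigma$ is $a \in \cL(M)$, so the result is $\sqsupseteq M$.

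If $M$ forbids all mixed words, no $b \in \Sigma$ realises the system. For $b \notin \Sigma_S$ the word $ab$ is mixed. For $b \in \Sigma_S$, the letter $c$ following an occurrence of $ab$ in a configuration of $M$ must lie in $\Sigma_S$ (else $bc$ is mixed), so $bc \in \cL(M)$.

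Inside your construction this shows up at the substitution step. The stage test succeeds, because its witness instantiates $x$ outside $\Sigma$, where $B_0$ imposes nothing. But there are no $a_i$ ``not yet occurring in $G_s \cup B_s$'', since $B_0$ mentions every $a_i$. Substituting any $a_i$ puts the mixed word $aa_i$ into $G_{s+1} \cap B_{s+1}$, so step (iii) fails.

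You also located the danger the wrong way round. The prohibition does not make systems inconsistent with $M$, because consistency is tested in extensions that may use letters outside $\Sigma_M$. What it does is make consistent systems unrealisable in $M$.

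Your fallback is the correct fix, and it is exactly the paper's proof. The paper reruns the existence construction seeded with $G_0 = \cL(S)$ and $B_0 = \cL^c(S)$. Seeding all of $\cL(S)$ at once is harmless, since the construction need not be effective. It also makes your extra requirement $T \sqsupseteq S$ in the stage test automatic, because any $T \models L(G_s) \land L^c(B_s)$ then has $\cL(T) \cap \Sigma_S^{*d} = \cL(S)$. If you interleave instead, you must keep that requirement explicit, as you do.

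Two points make the fallback work:
\begin{itemize}
\item $\cL^c(S)$ mentions only letters of $\Sigma_S$, so the $a_i$ really are fresh. The witness built as in the consistency criterion is then $\sqsupseteq S$ by induction: take $S$ together with one configuration displaying the words of $G_s$ not already in $\cL(S)$, in a sea of a fresh padding letter.
\item Choose each padding word $w^+$ inside the language of the current witness, renaming any letters outside $\Sigma$ to unused $a_i$. Then no new $\Sigma_S$-word enters $G$ or $B$, and the invariant, hence $S \sqsubseteq M$, survives to the limit.
\end{itemize}
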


\begin{proofsketch}
    Run the argument as in the proof of \cref{ec for consistent systems exist}, but start with $G_0 = L(S)$ and $B_0 = L^c(S)$.
\qed\end{proofsketch}

\begin{proposition}
    Let $M$ be existentially closed for consistent systems in dimension $d$.  Then $\Sigma_M$ is infinite.
\end{proposition}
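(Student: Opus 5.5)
We argue by contradiction. Suppose $M$ is ecfcs in dimension $d$ and $\Sigma_M$ is finite, say $\Sigma_M = \{a_0, \ldots, a_{n-1}\}$. The idea is to write down a consistent system that asks for a \emph{new} letter, i.e.\ a letter distinct from all of $a_0,\ldots,a_{n-1}$. Existential closure would then force such a letter to live in $\Sigma_M$, which is impossible.

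\textbf{The system.} Take one variable $x$ and let $\tuple a = (a_0, \ldots, a_{n-1})$. Set $U(x,\tuple a) = \{x\}$, the one-letter word, and
\[ V(x,\tuple a) = \{\, x a_i \mid i < n \,\} \cup \{\, a_i x \mid i < n \,\}, \]
the two-letter words placed along the first coordinate direction. Any letter $b$ satisfying $L(b) \land L^c(V(b,\tuple a))$ occurs in the subshift, yet never sits next to any $a_i$ in that direction.

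\textbf{Consistency.} No word of $U$ contains a word of $V$ as a subword. The word $x$ is a single variable letter, so it lies outside $\Sigma_M^*$ and in particular $U \cap \cL^c(M) = \emptyset$. By \cref{sufficient condition for consistency} the system $\exists x\ (L(U) \land L^c(V))$ is therefore consistent with $M$.

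One can also see this directly. Take a fresh letter $b$ and let $S = M \cup \{b^{\Z^d}\}$. This set is closed and shift-invariant. Its language is $\cL(M)$ together with the words over $\{b\}$, so $\cL(S) \cap \Sigma_M^{*d} = \cL(M)$ and hence $S \sqsupseteq M$. Moreover $b \in \cL(S)$, and no word $ba_i$ or $a_ib$ appears in $S$.

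\textbf{Satisfaction in $M$ fails.} Since $M$ is ecfcs, there would be $b' \in \Sigma_M$ with $b' \in \cL(M)$ and $b'a_i, a_ib' \in \cL^c(M)$ for all $i<n$. But $b' = a_k$ for some $k$. Take any $y \in M$ containing $b'$ at some position $u$. The letter $y(u+e_1)$ is some $a_i$, so $b'a_i \in \cL(M)$. This is a contradiction.

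The one delicate point is whether $\tuple b$ in the definition of satisfaction is allowed to coincide with letters of $\tuple a$. If it must be distinct, the contradiction is immediate, since $\Sigma_M \setminus \tuple a = \emptyset$. The argument above covers the other reading as well.
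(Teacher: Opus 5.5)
Your proof is correct and is essentially the paper's argument. The paper uses the system $\exists x\,(L(a_0x)\land L^c(xa_0,\ldots,xa_{n-1}))$ and reaches the same contradiction: any letter occurring in a configuration of $M$ has some neighbour in $\Sigma_M$ along $e_1$. Using $L(x)$ instead of $L(a_0x)$ and exhibiting $M\cup\{b^{\Z^d}\}$ directly is slightly cleaner, since it avoids the paper's assumption that every $a_i$ (in particular $a_0$) lies in $\cL(M)$; the extra constraints $a_ix$ are harmless but unnecessary.
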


\begin{proof}
    Suppose that $\Sigma_M = \{a_0, \ldots, a_{n-1}\}$ and, without loss of generality, that $a_i \in \cL(M)$ for all $i < n$.  Then, by \cref{sufficient condition for consistency}, $\exists x\ (L(a_0x) \land L^c( xa_0, \ldots, xa_{n-1}))$ is consistent with $M$, and hence satisfied in $M$, say by $b$.

    However, $b$ cannot be $a_i$ for any $i$: for every $i$, because $a_i \in \cL(M)$, $a_i$ satisfies one of $\{ xa_0, \ldots, xa_{n-1}\}$.
\qed\end{proof}

A similar argument implies that existentially closed shifts for consistent systems are transitive.  

\begin{proposition}
    Let $M$ be existentially closed for consistent systems in dimension $d$.  Then for any $v,w \in L(M)$, there is $x \in M$ with $v,w \in L(x)$.
\end{proposition}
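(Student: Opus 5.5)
Given $v,w \in \cL(M)$, I will use existential closure to force $M$ itself to contain a single word in which both $v$ and $w$ occur. The resulting word lies in $\cL(M)$, so it occurs in some point $x \in M$, and then $v,w \in \cL(x)$.

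To build the system, work in dimension $d$. Introduce one variable letter $x_0$. Form a single word $u(x_0, \tuple a)$ on a rectangle large enough to contain disjoint copies of $v$ and $w$ placed far apart, and fill every remaining cell with $x_0$. Here $\tuple a$ lists the letters occurring in $v$ and $w$. The system is $\exists x_0\, L(\{u(x_0,\tuple a)\})$, with $G = \{u\}$ and $B = \emptyset$.

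The first thing to check is that this system is consistent with $M$. By \cref{sufficient condition for consistency} this holds exactly when $G \cap \cL^c(M) = \emptyset$ and no word of $G$ contains a word of $B$. The second condition is vacuous since $B=\emptyset$. For the first, note that the paper's construction really treats the variables as fresh letters, so $u(x_0,\tuple a)$ is not a word over $\Sigma_M$ and cannot lie in $\cL^c(M)$. Alternatively I verify consistency directly, imitating the proof of that proposition. Let $S$ be the least subshift over $\Sigma_M \cup \{b\}$, with $b$ a new letter, containing $M$ together with one configuration $y$ that displays $u(b,\tuple a)$ surrounded by $b$'s. The step I expect to need care is checking $M \sqsubseteq S$, i.e.\ that the closure adds no new words over $\Sigma_M$. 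Every word in the orbit closure of $y$ either uses $b$ or is a subword of $v$ or of $w$. The latter lie in $\cL(M)$, so the languages restricted to $\Sigma_M$ agree.

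Ecfcs then gives a letter $c \in \Sigma_M$ with $u(c,\tuple a) \in \cL(M)$. This word contains $v$ and $w$ as subwords, so taking any $x \in M$ in which $u(c,\tuple a)$ appears gives $v,w \in \cL(x)$.

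The one subtlety is the case where the witness $c$ is one of the letters of $\tuple a$. This causes no harm, because the conclusion only needs $u(c,\tuple a)$ itself to be in $\cL(M)$.
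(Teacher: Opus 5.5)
Your argument is correct and is essentially the paper's: the paper works in dimension $1$ with the single system $\exists x\, L(wxv)$, and your rectangle with $v$ and $w$ separated by cells of the variable $x_0$ is exactly the $d$-dimensional version that the paper says ``evidently generalises''. Of your two consistency checks, rely on the direct one. The literal reading of \cref{sufficient condition for consistency} (a word containing a variable can never lie in $\cL^c(M)$) never uses $v,w\in\cL(M)$, whereas your orbit-closure argument uses exactly that hypothesis together with the separating gap.
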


We note that the following argument fundamentally relies on the convention that words are connected.

\begin{proof}
    (We work in dimension 1 for simplicity, but the argument evidently generalises.)  Let $v,w \in \cL(M)$.   Then, by \cref{sufficient condition for consistency}, $\exists x\ L(wxv)$ is consistent with $M$ and hence satisfied in $M$.
\qed\end{proof}

However, this notion of existential closure does not satisfy an analogue of Ziegler's Theorem.  We use $\sqsubsetsim$ rather than $\sqsubseteq$ in this theorem, otherwise the statement is trivially true, by choosing $M$ and $N$ ecfcs's  but with $\Sigma_M \cap \Sigma_N = \emptyset$.

\begin{theorem}\label{no universal substructure for ecfcs}
    There is no subshift $S \neq \emptyset$ such that for every subshift $M$ that is existentially closed for consistent systems in dimension $d$, $S \sqsubsetsim M$.
\end{theorem}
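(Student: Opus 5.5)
The approach is to run the forcing construction from the proof of \cref{ec for consistent systems exist} again, interleaving extra diagonalisation requirements that kill every possible $\sqsubsetsim$-embedding of a fixed $S$. Fix $S \neq \emptyset$ with finite alphabet $\Sigma_S = \{s_0, \ldots, s_{n-1}\}$ and dimension $d_S$.

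If $d_S > d$ there is nothing to prove, since $S \sqsubsetsim M$ forces $d_S \leq d_M$. So assume $d_S \leq d$. We use the infinite alphabet $\Sigma = \{a_i \mid i < \omega\}$ from that proof. The injective maps $\hat f : \Sigma_S \into \Sigma$ form a countable family $(\hat f_e)_{e<\omega}$. We interleave the stages: even stages $2s+1$ handle the pair $(F_j, F_k)$ with $s = \langle j,k\rangle$ exactly as before, and odd stages handle the requirement
\[ \mathcal{R}_e : \ f_e(S) \not\sqsubseteq M. \]

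At a stage devoted to $\mathcal{R}_e$ with current finite sets $G_s, B_s$, we proceed as follows.
\begin{enumerate}
\item Since $S \neq \emptyset$, $\cL(S)$ contains words with arbitrarily large domains.
\item Pick $u \in \cL(S)$ whose domain is larger than that of every word in the finite set $G_s$. Viewing $u$ as a $d$-dimensional word via $\Sigma^{*d_S} \subseteq \Sigma^{*d}$, the word $\hat f_e(u)$ then cannot be a subword of any word in $G_s$.
\item Put $B_{s+1} = B_s \cup \{\hat f_e(u)\}$, and $G_{s+1} = G_s$ (after the usual $w^+$ padding step).
\end{enumerate}
By induction, no word of $G_s$ contains a word of $B_s$ as a subword. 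Hence \cref{sufficient condition for consistency} shows that $(G_{s+1}, B_{s+1})$ is still a consistent pair for $\Sigma$. Therefore $G_{s+1} \subseteq \cL(\pres{\Sigma}{B_{s+1}}^d)$, and the padded words $w^+$ can still be found. Since $B$ only grows, $\hat f_e(u) \in \cL^c(M)$ for the final $M = \pres{\Sigma}{B}^d$. But $u \in \cL(S)$, so $\cL(M) \cap \hat f_e(\Sigma_S)^{*d_S} \neq \cL(f_e(S))$, i.e.\ $f_e(S) \not\sqsubseteq M$. As every injection $\Sigma_S \into \Sigma_M = \Sigma$ is some $\hat f_e$, this gives $S \not\sqsubsetsim M$.

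It remains to recheck that $M$ is still ecfcs and that $G = \cL(M)$, since the diagonalisation inserts forbidden words that were not requested by any $(F_j,F_k)$. I expect this to be the main (though mild) obstacle, and I would handle it as follows.
\begin{itemize}
\item The verification in the proof of \cref{ec for consistent systems exist} only ever asks whether $L(G_s \cup F_j) \land L^c(B_s \cup F_k)$ is consistent, relative to the \emph{current} $B_s$. So if some $T \sqsupseteq M$ witnesses consistency of $\exists \tuple x\,(L(U)\land L^c(V))$ with $M$, then $T$ also satisfies $L(G_s \cup U) \land L^c(B_s \cup V)$. This holds because $G_s \subseteq \cL(M) \subseteq \cL(T)$ and $B_s \subseteq \cL^c(M) \subseteq \cL^c(T)$. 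Hence the corresponding even stage acts and $M$ satisfies the system.
\item Likewise, $w \in \cL(M)$ makes the pair $(\{w\}, \emptyset)$ consistent at its stage, so $w \in G$.
\item The inclusion $G \subseteq \cL(M)$ follows from the nested $w^+$ argument verbatim.
\end{itemize}
Consequently $M$ is ecfcs in dimension $d$, yet $S \not\sqsubsetsim M$, which proves \cref{no universal substructure for ecfcs}.
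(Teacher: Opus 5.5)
Your proof is correct, but it is organised differently from the paper's. The paper first proves an intermediate claim: if $S \sqsubsetsim M$ for every ecfcs $M$, then $S$ is ``strongly finitely determined''. That is, some finite $(G,B)$ and some $\tuple{c}$ force $S_{\tuple{c}} \sqsubseteq T$ for every $T \models L(G) \land L^c(B)$ over any larger alphabet. The claim is proved by a forcing construction whose odd stages use the \emph{failure} of that property abstractly, to find a word to add to $G$ or to $B$. Only afterwards does the paper refute the property for nonempty $S$, by adding to $B$ a word $w_{\tuple{c}} \in \cL(S_{\tuple{c}})$ that is not a subword of any word of $G$.

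You plug this last trick directly into the diagonalisation stages, against an explicit enumeration of the injections $\Sigma_S \into \Sigma$. Your observation that the extra forbidden words never spoil the later consistency checks is exactly what keeps $M$ ecfcs: the final $M$, and hence every $T \sqsupseteq M$, forbids them as well.

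Your route is shorter. It also avoids formulating the intermediate notion, which as stated in the paper needs a consistency hypothesis on $(G,B)$ to be non-vacuous. The paper's route buys a conceptual point. The implication ``embeds into every ec structure $\Rightarrow$ finitely determined'' from Ziegler's theorem still holds in this setting. So the failure lies entirely on the definability side, in the sensitivity of consistency to the ambient alphabet.

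Both arguments tacitly use $d_S \geq 1$, to get words of $\cL(S)$ with arbitrarily large domain. For $d_S = 0$ the statement is actually false. A one-point subshift on one letter $\sqsubsetsim$-embeds into every ecfcs $M$, because $\exists x\, L(x)$ is consistent with every $M$.
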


\begin{proof}
We implicitly fix throughout a dimension $d$.  We start by showing that any $S$ which satisfies the condition of the theorem is ``strongly finitely determined''.  

Suppose that $S \sqsubsetsim M$ for every ecfcs $M$ and let $\Sigma_S = \{a_0, \ldots, a_{n-1}\}$ be an ordering of $\Sigma_S$. Let $\Sigma=\{c_i\mid i<\omega\}$ be an infinite alphabet and let $\{F_k\mid k<\omega\}$ again be an effective enumeration of the finite subsets of $\left( \Sigma \cup X \right)^{*d}$, where $X$ is an infinite subset, disjoint from $\Sigma$.   If $|\tuple c| = |\Sigma_S|$, we write $S_{\tuple c}$ for the subshift on alphabet $\tuple c$ obtained by replacing $a_i$ with $c_i$.
\begin{claim}
    There is an alphabet $\Delta$, a pair $(G,B)$ of finite sets of words in $\Delta$, and some $\tuple c \subseteq \Sigma$ such that for every $\Delta' \supseteq \Delta$, and every $T$ with alphabet $\Delta'$ if $T \models L(G) \land L^c(B)$, then $S_{\tuple c} \sqsubseteq T$.
\end{claim}

\begin{claimproof}
    Suppose $S$ does not satisfy the conclusion of the claim.  Then we will build $M$ by modifying the proof of \cref{ec for consistent systems exist} with $S \not\sqsubsetsim M$.  Let $(\tuple{c}_e)_{e < \omega}$ be an enumeration of the tuples of length $|\Sigma_S|$ from $\Sigma$.
    
    At even stages $s$, let $(G_s,B_s)$ be the stage $s$-approximation to $M$.  As in the previous proof, add the $\frac s2$th pair $(F_j, F_k)$ if it is consistent to do so; and do nothing otherwise.  Further add, for every $w \in G$, a word $w^+$ as before.  This ensures that $M$ is existentially closed for consistent systems.

    At odd stages $s$, we ensure that the $(s-1)/2$th tuple $\tuple c \subseteq \Sigma$ does not give rise to a full restriction of $M$ equal to $S_{\tuple c_{(s-1)/2}}$.  Let $(G_s,B_s)$ be the approximation to $M$ at stage $s$ and let $\Sigma_s$ be the symbols used so far.  By assumption, there is some $\Delta \supseteq \Sigma_s$ and some $T$ with alphabet $\Delta$ such that $T \models L(G_s) \land L^c(B_s)$ but $S_{\tuple c_{(s-1)/2}} \not\sqsubseteq T$.  Thus, we may take a word $w \in \Delta^{*d}$ such that $(G_s \cup \{w\}, B_s)$ is a consistent pair for $\Delta$ and $w \in \cL^c(S_{\tuple{c}_{(s-1)/2}})$ or $(G_s, B_s \cup \{w\})$ is a consistent pair for $\Delta$ but $w \in \cL(S_{\tuple c_{(s-1)/2}})$.  This ensures that $S \not\sqsubsetsim M$ via $\tuple c_{(s-1)/2}$.
    \renewcommand{\squareforqed}{$\dashv$}
    \qed
\end{claimproof}

We finish the proof by showing that no $S \neq \emptyset$ satisfying the claim exists.  Indeed, let $S$ be a subshift in alphabet $\Sigma_S$ and let $(G,B)$ be a consistent pair for some alphabet $\Lambda$.  

Fix $\Delta \supseteq \Lambda \cup \{\#\}$.  For each of the finitely many $\tuple{c} \subseteq \Lambda$ with $|\tuple{c}| = |\Sigma_S|$, choose $w_{\tuple c} \in \cL(S_{\tuple c})$ such that, for every $v \in G$, $w_{\tuple c}$ is not a subword of $v$.  Here we use that $S \neq \emptyset$, so $\cL(S_{\tuple c})$ is infinite.  Let 
\[B^+ = B \cup \{w_{\tuple{c}} \mid \tuple c \subseteq \Lambda\; \land \;|\tuple{c}| = |\Sigma_S| \}.\]
Then by \cref{sufficient condition for consistency}, $(G, B^+)$ is a consistent pair for $\Delta$.  Further, the construction of $B^+$ evidently guarantees that for every $T$ with $T \models L(G) \land L^c(B^+)$ and every $\tuple c \subseteq \Lambda$, $S_{\tuple c} \not\sqsubseteq T$.
\qed\end{proof}

\begin{remark}
One important concept in group theory that has no natural analogue in subshift theory is that of a finitely generated substructure.  It would be natural, for example, to look at the alphabet slices $M \restriction_\Delta$ for $\Delta \subseteq_{fin} \Sigma_M$.  However, in general, ecfcs subshifts may not have any nontrivial alphabet slices:
    
    In fact, a construction like the one in \cref{ec for consistent systems exist} \emph{cannot} guarantee that there are nontrivial slices.  For instance, one could, at odd stages $s = 2t+1$, add a word $v^+$ to $B$ for each $v \in \cL(\pres{\tuple c}{B_{s+1}})$ such that $v^+$ contains $v$ as a subword and $B^+ = B \cup \{v^+ \mid v \in \cL(\pres{\tuple c}{B_{s+1}})\}$ forms a consistent pair with $G_s$, this guarantees that $M \restriction_{\tuple c} = \emptyset$.

\smallskip

    This underscores a key tension in subshifts between any notions of existential closure and finitary definability.  On the one hand, the notions of definability, as exemplified in the previous proof (via \cref{sufficient condition for consistency}), are very sensitive to the ambient alphabet.  However, forcing constructions tend to produce subshifts with various mixing properties; in particular, with very little ``local structure''.
\end{remark}

As a final application, we observe that the technique of the proof of \cref{no universal substructure for ecfcs} together with the construction of Hodges in Theorem 4.1.5 in \cite{hodges_building_1985} allows one to show the following.
\begin{proposition}
    There are $2^{\aleph_0}$ ecfcs subshifts $(M_\alpha)_{\alpha < 2^\omega}$ on a countably infinite alphabet $\Sigma$ such that if $\alpha \neq \beta$, then $M_\alpha \not\sqsubsetsim M_\beta$.
\end{proposition}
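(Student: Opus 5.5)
We build the family $(M_\alpha)_{\alpha<2^\omega}$ as the branches of a binary tree of finite forcing conditions, following Hodges' Theorem 4.1.5. Concretely, for each $\sigma\in 2^{<\omega}$ we construct a condition $(G_\sigma,B_\sigma)$: finite sets of words in $\Sigma=\{c_i\mid i<\omega\}$ with $G_\sigma\subseteq\cL(\pres{\Sigma}{B_\sigma}^d)$. These are arranged so that $\sigma\preceq\tau$ implies $G_\sigma\subseteq G_\tau$ and $B_\sigma\subseteq B_\tau$. For $\alpha\in 2^\omega$ we then put $G_\alpha=\bigcup_n G_{\alpha\restriction n}$, $B_\alpha=\bigcup_n B_{\alpha\restriction n}$, and $M_\alpha=\pres{\Sigma}{B_\alpha}^d$. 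Three kinds of requirement are interleaved along the levels of the tree, handled by a single bookkeeping enumeration of $\omega$.

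The first two kinds are handled on every branch exactly as in the proof of \cref{ec for consistent systems exist}. \emph{Existential closure:} at level $n=\langle j,k\rangle$ we add the pair $(F_j,F_k)$ to every node if doing so is consistent, substituting fresh letters for the variables. \emph{Language control:} we add the padded words $w^+$ of taxicab radius $\geq n$, which avoid $B$. The verification that each $M_\alpha$ is ecfcs and that $G_\alpha=\cL(M_\alpha)$ is then the same as before. It relies on \cref{sufficient condition for consistency}, which keeps every node a consistent pair, because $\Sigma$ always has infinitely many unused letters.

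The third kind is the splitting and diagonalisation requirement, and it carries the content of the proposition. We must ensure that, for $\alpha\neq\beta$, no injection $\hat f\colon\Sigma\into\Sigma$ gives $f(M_\alpha)\sqsubseteq M_\beta$. Since $\Sigma$ is infinite, the infinitely many injections cannot be killed individually. Instead, as in the proof of \cref{no universal substructure for ecfcs}, we diagonalise against finite partial injections. At a diagonalisation stage we take a pair of incomparable nodes $\sigma,\tau$ at the current level and a finite partial injection $p$ from the letters used in $G_\sigma\cup B_\sigma$ into $\Sigma$. We extend $\tau$ so that $p$ cannot be extended to an injection witnessing $M_\sigma\sqsubsetsim M_\tau$.

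To do this, pick $w\in G_\sigma$, or a word on the letters of $\sigma$ that we first add to $G_\sigma$. If $p(w)$ does not contain a word of $B_\tau$ and is not yet in $G_\tau$, add $p(w)$ to $B_\tau$. This remains a consistent pair for $\Sigma$ by the subword criterion of \cref{sufficient condition for consistency}, after first choosing $w$ long enough that $p(w)$ is not a subword of any word in $G_\tau$. That choice is possible because $G_\tau$ is finite. If instead $p(w)$ is already forced into $G_\tau$, we take a fresh word $v$ over the letters in $\operatorname{dom}(p)$ and add $v$ to $B_\sigma$ and $p(v)$ to $G_\tau$, again keeping both nodes consistent.

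Any injection witnessing $M_\alpha\sqsubsetsim M_\beta$ must restrict, on the finitely many letters of some $G_{\alpha\restriction n}$, to one of the countably many finite partial injections $p$. Each pair (pair of nodes, $p$) is eventually handled along every extension, because we split at every level and revisit all earlier pairs. Hence $M_\alpha\not\sqsubsetsim M_\beta$.

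The hard step is the bookkeeping. A single diagonalisation must handle a finite partial injection while future existential-closure moves keep introducing new letters whose images are not yet determined. The resolution is to diagonalise only on letters already present, since a witnessing map must act on those. Obtaining $2^{\aleph_0}$ pairwise distinct branches then follows because all incomparable pairs of nodes are treated in both directions.
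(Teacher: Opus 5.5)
Your outline is the one the paper sketches: run the ecfcs construction on a binary tree, and for incomparable nodes $\sigma,\tau$ and a finite partial injection $p$ (the paper's $\tuple c\mapsto\tuple d$), find a word $w$ that lands in $\cL$ at one node while $p(w)$ lands in $\cL^c$ at the other. The paper's sketch only asserts that such a separating word can be found at each stage. The step where you try to justify it is where your argument breaks.

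Call a word \emph{decided} at a condition $(G,B)$ if it is a subword of a word of $G$ (so it ends up in the language) or contains a word of $B$ (so it ends up in the co-language). Your first move needs a \emph{long} word over $\operatorname{dom}(p)$ that avoids $B_\sigma$. Your fallback needs $p(v)$ to avoid $B_\tau$. Finiteness of $G_\tau$ (resp.\ $G_\sigma$) only handles the other half of each requirement. Both moves fail in the following situation, when $(\sigma,\tau,p)$ is handled at current extensions $\sigma'\succeq\sigma$, $\tau'\succeq\tau$:
every word over $\operatorname{dom}(p)$ is decided at $\sigma'$;
every word over $\operatorname{ran}(p)$ is decided at $\tau'$;
$p$ matches the finitely many positive ones.
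Then no extension of the conditions ever separates $p$.

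Nothing in your construction prevents this. You revisit $(\sigma,\tau,p)$ only at later levels. In between, an existential-closure step with parameters in $\operatorname{dom}(p)$ may put into $B$ every word over $\operatorname{dom}(p)$ that has all sides at most $N$ and is not a subword of a word of $G$, where $N$ exceeds every side length occurring in $G$. That is a consistent system, so it is honoured when it comes up. Moreover, once $\operatorname{dom}(p)$ is decided at $\sigma'$ by finite data $(U,V)$, the parameter-free system $\exists\tuple x\,(L(U(\tuple x))\wedge L^c(V(\tuple x)))$ is always consistent. Your own existential-closure steps therefore plant at $\tau'$ a fresh tuple $\tuple d$ with the identical decided type, and $p\colon\operatorname{dom}(p)\to\tuple d$ becomes unkillable.

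Consequently your closing inference (``every $(\sigma,\tau,p)$ is eventually handled, hence $M_\alpha\not\sqsubsetsim M_\beta$'') is unsupported. An injection $f$ all of whose finite restrictions get stuck this way survives the construction. You give no argument that some restriction of every $f$ avoids being stuck.

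The correct local fact is this: $p$ can be separated iff some word over $\operatorname{dom}(p)$ at $\sigma'$, or over $\operatorname{ran}(p)$ at $\tau'$, is still undecided, or the decided ones already disagree under $p$. In that case one undecided word, not a long one, suffices.

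The missing ingredient is a mechanism guaranteeing this on every branch. One possibility:
reserve at each node a fresh designated tuple carrying a committed \emph{infinite} language, with no two of these equal up to renaming of letters;
count these commitments as part of the conditions when testing consistency of existential-closure steps;
diagonalise the tuples of $\tau$ against the designated tuple of $\sigma$ for $\sigma\perp\tau$.
A tuple whose type is finitely decided has a finite final type, so it can never realise such an infinite committed type. This is what makes separation possible. An $f$ with $f(M_\alpha)\sqsubseteq M_\beta$ would then have to send the designated tuple of some $\alpha\restriction n$ to a tuple of $M_\beta$ realising a type that $M_\beta$ was built to omit.
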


Notice that this is optimal---any subshift in $\Sigma$ is determined by a subset of $\Sigma^{*d}$, of which there are continuum many.

\begin{proofsketch}
    The idea of this argument is to run the by-now standard construction of existentially closed subshifts for consistent systems, but, at each stage $s$, ensure that there at least $2^s$ different subshifts $(M_\sigma)_{\sigma \in 2^s}$.  This will be done by running the construction on a binary tree. One further ensures that stage $s$ that for every pair of tuples $\tuple c$ and $\tuple d$ from $\Sigma$ and every pair of approximations $M_\sigma$ and $M_\tau$, there is a word $w_{\tuple c, \tuple d}(\tuple x)$ such that $w_{\tuple c, \tuple d}(\tuple c) \in \cL^c(M_\sigma)$ and $w_{\tuple c, \tuple d}(\tuple d) \in \cL(M_\tau)$, or vice versa.  This ensures that, for any $\alpha \succeq \sigma$ and $\beta \succeq \tau$, we have $M_\alpha \not\sqsubsetsim M_\beta$ via a map extending $\tuple c \mapsto \tuple d$.
\qed\end{proofsketch}

\begin{credits}
\subsubsection{\ackname} The first author was supported by the Secretaría de Ciencia, Humanidades, Tecnología e Innovación de México through scholarship number 836694.  Both authors would like to thank Mariya Soskova for many helpful conversations, and the anonymous referees for their detailed feedback.  The authors used AI tools to aid in the construction of their figures.

\subsubsection{\discintname}
The authors have no competing interests to declare that are
relevant to the content of this article.
\end{credits}

\bibliographystyle{plain}
\bibliography{references}

@inproceedings{jeandel_characterization_2019,
    address = {Dagstuhl, Germany},
    series = {{LIPIcs}},
    title = {A {Characterization} of {Subshifts} with {Computable} {Language}},
    volume = {126},
    language = {en},
    booktitle = {36th {International} {Symposium} on {Theoretical} {Aspects} of {Computer} {Science}},
    publisher = {Schloss Dagstuhl},
    author = {Jeandel, Emmanuel and Vanier, Pascal},
    year = {2019},
    pages = {40:1--40:16},
}

@article{hochman_dynamics_2009,
    title = {On the dynamics and recursive properties of multidimensional symbolic systems},
    volume = {176},
    language = {en},
    number = {1},
    journal = {Inventiones mathematicae},
    author = {Hochman, Michael},
    month = apr,
    year = {2009},
    pages = {131--167},
}

@article{aubrun_simulation_2013,
    title = {Simulation of {Effective} {Subshifts} by {Two}-dimensional {Subshifts} of {Finite} {Type}},
    volume = {126},
    language = {en},
    number = {1},
    journal = {Acta Applicandae Mathematicae},
    author = {Aubrun, Nathalie and Sablik, Mathieu},
    month = aug,
    year = {2013},
    pages = {35--63},
}

@incollection{ziegler_algebraisch_1980,
    series = {Studies in {Logic} and the {Foundations} of {Mathematics}},
    title = {Algebraisch {Abgeschlossene} {Gruppen}},
    volume = {95},
    booktitle = {Word {Problems} {II}},
    publisher = {Elsevier},
    author = {Ziegler, Martin},
    year = {1980}
}

@inproceedings{jeandel_enumeration_2017,
    title = {Enumeration reducibility in closure spaces with applications to logic and algebra},
    booktitle = {{LICS}},
    author = {Jeandel, Emmanuel},
    year = {2017}
}

@incollection{durand_effective_2010,
    address = {Berlin, Heidelberg},
    title = {Effective {Closed} {Subshifts} in {1D} {Can} {Be} {Implemented} in {2D}},
    language = {en},
    booktitle = {Fields of {Logic} and {Computation}: {Essays} {Dedicated} to {Yuri} {Gurevich} on the {Occasion} of {His} 70th {Birthday}},
    publisher = {Springer},
    author = {Durand, Bruno and Romashchenko, Andrei and Shen, Alexander},
    year = {2010}
}

@book{hodges_building_1985,
  title={Building Models by Games},
  author={Hodges, W.},
  series={London Mathematical Society Student Texts},
  year={1985},
  publisher={Cambridge University Press}
}

@article{belegradek_higmans_1996,
    title = {Higman's {Embedding} {Theorem} in a {General} {Setting} and {Its} {Application} to {Existentially} {Closed} {Algebras}},
    volume = {37},
    number = {4},
    journal = {Notre Dame Journal of Formal Logic},
    publisher = {Duke University Press},
    author = {Belegradek, Oleg V.},
    year = {1996}
}

@article{rips_characterization_82,
    author = {Rips, E.},
    title = {Another Characterization of Finitely Generated Groups with a Solvable Word Problem},
    journal = {Bulletin of the London Mathematical Society},
    volume = {14},
    number = {1},
    pages = {43-44},
    year = {1982}
}

@phthesis{NakidCorderoComputability2026,
  title = {The Computability of Asymmetric Information},
  author = {Nakid Cordero, Antonio},
  year = {2026},
  school = {University of Wisconsin--Madison}
}

\end{document}